\documentclass[pamm,a4paper,fleqn]{w-art}
\usepackage{times,cite,w-thm}
\usepackage[T1]{fontenc}
\usepackage[utf8]{inputenc}
\usepackage{mathtools}
\usepackage{amsmath,amssymb,amsthm}
\usepackage{bm}
\usepackage{enumitem}
\usepackage{xcolor}
\usepackage{multicol}

\def\softd{{\leavevmode\setbox1=\hbox{d}%
		\hbox to 1.05\wd1{d\kern-0.4ex{\char039}\hss}}}

\newlist{assump}{enumerate}{1}
\setlist[assump,1]{label=(A\arabic*),ref=(A\arabic*),leftmargin=*, labelsep=1em}
\setlist[assump,2]{label=(A\arabic{assumpi}.\arabic*),ref=(A\arabic{assumpi}.\arabic*)}
\setlist[assump]{resume}
\newlist{bcs}{enumerate}{1}
\setlist[bcs,1]{label=(BC\arabic*),ref=(BC\arabic*)}
\theoremstyle{plain}

\theoremstyle{definition}

\usepackage{graphicx}

\def\RR{\mathbb{R}}

\def\M{M}
\def\K{K}

\def\I{\mathbf{I}}

\def\u{\mathbf{u}}
\def\vv{\mathbf{v}}

\def\vtheta{\vartheta}
\def\Th{\mathcal{T}_h}
\def\Vh{\mathcal{V}_h}
\def\Qh{\mathcal{Q}_h}
\def\Xh{\mathcal{X}_h}
\def\Itau{\mathcal{I}_\tau}

\def\dt{\partial_t}

\def\Du{\mathrm{D}\u}
\def\Dv{\mathrm{D}\vv}
\def\div{\operatorname{div}}

\def\la{\langle}
\def\ra{\rangle}

\DeclarePairedDelimiter{\norm}{\|}{\|}
\DeclarePairedDelimiter{\snorm}{|}{|}

\begin{document}
%% \def\leftmark{Session title}
%%
%%    The information for the title page will be placed between
%%    \begin{document} and \maketitle. The order of most entries
%%    is determined by the class file and can not be changed by
%%    rearranging them. The maketitle command follows after the
%%    abstract.
%%
%%    Most of the following commands will be completed by the publisher.
%%
%%    \renewcommand{\copyrightyear}{2016}
%%    \DOIsuffix{pamm.20161zzzz}
%%    \Volume{16} 
%%    \Year{2016} 
%%    \pagespan{1}{}
%%
%%    The short title is optional:

\TitleLanguage[EN]
\title{Entropy-stable and energy-conservative fully-discrete finite element method for non-isothermal phase-field models}

%% Please do not enter footnotes or \inst{}-notes into the optional
%% argument of the author command. 

%% Please delete not needed author entries.
%% Information for the first author.
\author{\firstname{Aaron}  \lastname{Brunk}\inst{1,}%
  \footnote{Corresponding author: \ElectronicMail{abrunk@uni-mainz.de},
            phone +49 6131-39-24355}}

\address[\inst{1}]{\CountryCode[DE] Johannes Gutenberg University Mainz}
%%
%%    Information for the second author
\author{\firstname{Dennis} \lastname{Höhn}\inst{1}}%
  %\footnote{Second author footnote.}}

\
%%    Abstract is required.
\AbstractLanguage[EN]
\begin{abstract}
This work presents a conforming finite-element scheme for non-isothermal phase-field systems coupled to the incompressible Navier-Stokes equations. The proposed numerical scheme preserves entropy production and total energy conservation exactly by variable transformations using entropy as main variable instead of temperature. Convergence tests in space are conducted, and representative examples are provided to
demonstrate the scheme’s effectiveness.  
\end{abstract}
%% maketitle must follow the abstract.
\maketitle                   % Produces the title.

\section{Introduction}
    Fundamental processes in engineering involve melting/solidification or thermally driven flow of fluid or gas bubbles. These processes encompass various physical mechanisms and a complex interplay of effects, such as undercooling, surface tension, and latent heat. Conventionally, these phenomena are modelled using a sharp-interface representation~\cite {Rubinstein1971,Alexiades1992}. However, these descriptions often lack the possibility to allow topological changes, droplet coalescence or breakup, which might play a significant role.
    Usually, these limitations are overcome by considering phase-field models, cf.  \cite{ProvatasElder2010,Steinbach2009} and cf. \cite{KarmaRappel1998,BeckermannKarma1997,Liang}, often in combination with an equation for fluid flow. In the following, we look at an example of such models by considering a generalised phase-field equation $\phi$, an incompressible flow $\u$ and a heat transport $\vtheta$ coupled by the system
    \begin{align}
        \dt \phi &+ \div(\phi\u) - \div(M\nabla\tfrac{\mu}{\vtheta}) + N\tfrac{\mu}{\vtheta} = 0,  & \tfrac{\mu}{\vtheta} &=  \tfrac{\partial_\phi f(\phi,\vtheta)}{\vtheta} - \div(\tfrac{\kappa(\vtheta)}{\vtheta}\nabla\phi), \label{eq:sys1}
        \\
        \dt\u &+ (\u\cdot\nabla)\u - \div(\eta\Du - p\I - \bm{\sigma}) = 0,  & 0 &= \div(\u), \label{eq:sys2}\\
        \dt e(\phi,\vtheta) &+ \div(e(\phi,\vtheta)\u) + \div(K\nabla\tfrac{1}{\vtheta}) - (\eta\Du - p\I - \bm{\sigma}):\nabla\u = 0, & \bm{\sigma} &= \kappa(\vtheta)\nabla\phi\otimes\nabla\phi . \label{eq:sys3}
    \end{align}
    which is determined via the Helmholtz free energy given by
    \begin{equation*}
        F(\phi,\nabla\phi,\vtheta) = f(\phi,\vtheta) + \tfrac{\kappa(\vtheta)}{2}\snorm{\nabla\phi}^2.
    \end{equation*}
    From this, we can derive the functional relation for entropy, internal energy, total energy, (weighted) chemical potential and Korteweg stress as
    \begin{align}
        s(\phi,\nabla\phi,\vtheta) &= \partial_\vtheta F(\phi,\nabla\phi,\vtheta) = - \partial_\vtheta f(\phi,\vtheta) - \frac{\kappa'(\vtheta)}{2}\snorm{\nabla\phi}^2, \\
        e(\phi,\nabla\phi,\vtheta) &= F+\vtheta s = f(\phi,\vtheta)- \vtheta\partial_\vtheta f(\phi,\vtheta) + \frac{\kappa(\vtheta)-\vtheta\kappa'(\vtheta)}{2}\snorm{\nabla\phi}^2,\\
        e_{\mathrm{tot}}(\phi,\nabla\phi,\vtheta) &= e(\phi,\nabla\phi,\vtheta) + \frac{1}{2}\snorm{\u}^2, \qquad 
        \tfrac{\mu}{\vtheta} = \tfrac{\delta F/\vtheta}{\delta\phi}, \qquad \bm{\sigma} = \partial_{\nabla\phi}F\otimes \nabla\phi.
    \end{align}
    
    The system must be complemented by suitable initial and boundary data and can be shown to be entropy-productive and total-energy-conservative, given suitable boundary conditions.
    
    To use and rely on these models' predictions, stable, robust numerical methods are necessary. In energetic formulation, i.e. the internal energy equation is used together with the inverse temperature as state variable, we refer to \cite{Brunk2025_noniso,Brunk25Enu}. In the entropic formulation, i.e., when the entropy equation is used together with the temperature as a state variable, similar schemes have been developed \cite{Hoehn25,Hoehn26}.
    All the above schemes preserve entropy production, whereas energy or total energy conservation was shown only for the energetic formulations. Notably, in the energetic formulation, entropy production contains an additional numerical diffusion term, while the entropic formulation is exact. A more in-depth literature review can be found in \cite{Hoehn26}

    The aim of this work is to develop a numerical method that preserves exact entropy production and exact total energy conservation. The main idea is to replace the temperature as the main variable with the entropy, i.e., to reverse the usual Legendre transformations.

    The structure of the paper is as follows. In Section 2, we recall equivalent formulations of the internal energy equation and show entropy production and total energy conservation for the system under consideration. In Section 3,, we deduce a suitable variational formulation via a change of variables, which enables subsequent numerical approximation and the preservation of its structure in Section 4. Section 5 illustrates the method by convergence tests and application-inspired numerical tests.

\section{Equivalent formulations and inherent structures}

    The equation for heat transfer \eqref{eq:sys3} can be reformulated in various ways as an internal energy equation, a total energy equation, or an entropy equation. We first start to deduce the equation for total energy and compute
    \begin{align}
     \frac{1}{2}\left(\dt \snorm{\u}^2 + \div(\snorm{\u}^2\u)\right) = (\dt\u + (\u\cdot\nabla)\u\cdot\u ) = \div(\eta\Du - p\I- \bm{\sigma})\cdot\u.  \label{eq:usqaured}
    \end{align}
    
    Combination of \eqref{eq:sys3} with \eqref{eq:usqaured} yields the total energy equation given by
    \begin{align}
     \dt e_{\mathrm{tot}} + \div(e_{\mathrm{tot}}\u) - \div\left(\eta\Du\cdot\u - p\u - \bm{\sigma}\cdot\u \right) = 0.   \label{eq:totalenergy}
    \end{align}
    
    Next,, we will deduce the equation for entropy and recall well-known thermodynamic identities, such as $s=\tfrac{e-f}{\vtheta}$. With this, we compute the temporal derivative of entropy as follows
    \begin{align}
     \dt s = \tfrac{1}{\vtheta}(\dt e - \dt F) - \tfrac{s}{\vtheta}\dt\vtheta = \tfrac{1}{\vtheta}(\dt e - \dt \phi \partial_\phi f - \dt\nabla\phi \kappa(\vtheta)\nabla\phi) = \dt e \cdot\tfrac{1}{\vtheta} - \dt\phi \tfrac{\mu}{\vtheta} - \div(\dt\phi \tfrac{\kappa(\vtheta)}{\vtheta}\nabla\phi).  \label{eq:entropytd}
    \end{align}
    
    For the entropy transport, we follow the same computations as in  \cite{Hoehn26}, which yields 
    \begin{align}
     \div(s\u) = \u\cdot \nabla\left(\tfrac{e-F}{\vtheta}\right) = \u\cdot(\nabla e\cdot\tfrac{1}{\vtheta} - \nabla\phi\tfrac{\mu}{\vtheta}) - \div(\tfrac{1}{\vtheta}\bm{\sigma}\cdot\u) +\tfrac{1}{\vtheta}\bm{\sigma}:\nabla\u \label{eq:entropytrans}
    \end{align}
    
    Combination of \eqref{eq:entropytd} and \eqref{eq:entropytrans} yields the entropy equation given by
    \begin{align}
     \dt s + \div(s\u) =  \left(\dt e +\div(e\u)\right)\tfrac{1}{\vtheta} - \left(\dt\phi +\div(\phi\u)\right)\tfrac{\mu}{\vtheta} - \div(\dt\phi \tfrac{\kappa(\vtheta)}{\vtheta}\nabla\phi+\tfrac{1}{\vtheta}\bm{\sigma}\cdot\u)   +\tfrac{1}{\vtheta}\bm{\sigma}:\nabla\u.
    \end{align}
    Insertion of the partial differential equations \eqref{eq:sys1}--\eqref{eq:sys3} and rearrangement yields
    \begin{align}
     \dt s + \div(s\u) =  -\div(K\nabla\tfrac{1}{\vtheta})\tfrac{1}{\vtheta} - \left(\div(M\nabla\tfrac{\mu}{\vtheta}) - N\tfrac{\mu}{\vtheta}\right)\tfrac{\mu}{\vtheta} + \tfrac{\eta}{\vtheta}\snorm{\Du}^2- \div(\dt\phi \tfrac{\kappa(\vtheta)}{\vtheta}\nabla\phi + \tfrac{1}{\vtheta}\bm{\sigma}\cdot\u). 
    \end{align}
    
    To reveal the exact entropy production, we use the product rule and obtain
    \begin{align}
     \dt s &+ \div(s\u) = \mathcal{P} + \div(\mathbf{J}_S), \qquad  \mathcal{P} =  M\snorm{\nabla\tfrac{\mu}{\vtheta}}^2 + N\snorm{\tfrac{\mu}{\vtheta}}^2 +K\snorm{\nabla\tfrac{1}{\vtheta}}^2 + \tfrac{\eta}{\vtheta}\snorm{\Du}^2, \label{eq:entropyfull}
     \\
     \mathbf{J}_S &= \div\left(-K\nabla\tfrac{1}{\vtheta}\tfrac{1}{\vtheta} - M\nabla\tfrac{\mu}{\vtheta}\tfrac{\mu}{\vtheta}- \dt\phi \tfrac{\kappa(\vtheta)}{\vtheta}\nabla\phi - \tfrac{1}{\vtheta}\bm{\sigma}\cdot\u\right) \label{eq:entropyflux}
    \end{align}
    
    Under suitable boundary conditions we can integrate \eqref{eq:sys1}, \eqref{eq:totalenergy} and \eqref{eq:entropyfull} over the domain $\Omega$ and use integration by parts to obtain:
    \begin{align*}
     \frac{d}{dt}\int_\Omega \phi = 0 \text{ if } N=0, \qquad \frac{d}{dt}\int_\Omega e_{\mathrm{tot}} = 0, \qquad   \int_\Omega s = \int_\Omega \mathcal{P} \geq 0.
    \end{align*}
    
    Note that positive entropy production requires that $K,M,N,\eta$ are positive semi-definite matrices or non-negative functions.
    
    % Comparing to the results in \cite{Marita} we see that the model is consistent with \dennis{GENERIC?}. The entropy equation is a generalisation of the formulation obtained \cite{Dennis1,Dennis2}.
    
    \section{Variational reformulation:}
     
    To facilitate our analysis, we introduce the following set of assumptions:
    
    \begin{assump}
        \item We consider $\Omega \subset \mathbb{R}^d$, $d=1,2,3$ with periodic boundary conditions, i.e. 
            \begin{equation*}
                g(x+L_ie_i) = g(x) \text{ for } g\in\{\phi,\mu,\vtheta,\u,p\} \text{ and } i=1,\ldots, d,
            \end{equation*}
            where $\Omega=[0,L_1]\times\ldots\times[0,L_d]$ is a (hyper)-cube identified with the $d$-dimensional torus $\mathcal{T}^d$.
            Moreover, other functions on $\Omega$ are also assumed to be periodic.
        \item The diffusive mobility $M:=M(\phi,\vtheta)\in\RR$ and mass transfer rate $N:=N(\phi,\vtheta)\in\RR$ are (non)-negative.
        \item The heat conductivity $\K:=\K(\phi,\vtheta)\in\RR $ and the viscosity $\eta:=\eta(\phi,\vtheta)\in\RR$ are non-negative.
        \item The driving potential $f(\cdot,\cdot):\RR\times\RR_+\to \RR$ is smooth and strictly concave in $\vtheta$ for every fixed $\phi$ and goes to infinity for $\vtheta\to 0$. The gradient contribution $\kappa(\vtheta):\RR_+\mapsto\RR_+$ is concave.\label{as:pot}
        \item We introduce the abbreviation $\la a,b \ra=\int_\Omega ab$ and the skew-symmetric form given by $$c_{\mathrm{skw}}(\u,\u,\vv):=\tfrac{1}{2}\la (\u\cdot\nabla)\u,\vv \ra - \tfrac{1}{2}\la (\u\cdot\nabla)\vv,\u \ra$$.
    \end{assump}

    The goal of this section is to derive a variational formulation that allows subsequent discretisation. For this we introduce $\mu:=\vtheta\frac{\mu}{\vtheta}$ as
    \begin{equation}
        \mu  = \partial_\phi f(\phi,\vtheta) - \vtheta\div(\tfrac{\kappa(\vtheta)}{\vtheta}\nabla\phi). \label{eq:defmu}
    \end{equation}
    Next we expand the Korteweg stress $\bm{\sigma}$ using standard thermodynamic identities as follows:
    \begin{align}
     \div(\bm{\sigma}) &= \nabla\phi\otimes\nabla\phi \cdot\kappa'(\vtheta)\nabla\vtheta + \kappa(\vtheta)\Delta\phi\nabla\phi + \tfrac{\kappa(\vtheta)}{2}\nabla\snorm{\nabla\phi}^2 \notag\\
     &=\nabla\phi\otimes\nabla\phi \cdot\kappa'(\vtheta)\nabla\vtheta -\mu\nabla\phi + \partial_\phi f(\phi,\vtheta) - \vtheta(\nabla\phi\cdot\nabla\tfrac{\kappa(\vtheta)}{\vtheta})\nabla\phi + \tfrac{\kappa(\vtheta)}{2}\nabla\snorm{\nabla\phi}^2 
     \notag\\
     & = \tfrac{1}{\vtheta}\bm{\sigma}\cdot\nabla\vtheta -\mu\nabla\phi + \partial_\phi f(\phi,\vtheta) + \tfrac{\kappa(\vtheta)}{2}\nabla\snorm{\nabla\phi}^2 
     \notag\\
     & = \tfrac{1}{\vtheta}\bm{\sigma}\cdot\nabla\vtheta +\phi\nabla\mu - \partial_\vtheta f(\phi,\vtheta)\nabla\vtheta - \tfrac{\kappa'(\vtheta)}{2}\snorm{\nabla\phi}^2\nabla\vtheta + \nabla\left[f(\phi,\vtheta ) + \tfrac{\kappa(\vtheta)}{2}\snorm{\nabla\phi}^2 - \phi\mu\right]  \notag\\
     & = \tfrac{1}{\vtheta}\bm{\sigma}\cdot\nabla\vtheta +\phi\nabla\mu + s\nabla\vtheta + \nabla\left[F(\phi,\nabla\phi,\vtheta )  - \phi\mu\right]. \label{eq:korteweg}
    \end{align}
    Note that the gradient term $\nabla\left[F(\phi,\nabla\phi,\vtheta )  - \phi\mu\right]$ is absorbed into the pressure.
    \begin{align}
        \mu(\phi,\nabla\phi,\vtheta)  &= \delta_\phi F - \vtheta\partial_{\nabla\phi} F\cdot\nabla\tfrac{1}{\vtheta}
        \Longleftrightarrow \mu(\phi,\nabla\phi,s) = \delta_\phi e - \vtheta\partial_{\nabla\phi} e\cdot\nabla\tfrac{1}{\vtheta} = \delta_\phi e + \tfrac{1}{\vtheta}\partial_{\nabla\phi}e\cdot\nabla \vtheta.\label{eq:mutrafo}
    \end{align}
    This allows us to rewrite the system with the state variables $(\phi,\mu,s,\vtheta,\u,p)$ given by the following Lemma.
    \begin{lemma}\label{lem:var}
      Smooth solutions of system \eqref{eq:sys1}--\eqref{eq:sys3}, i.e. $(\phi,\mu,\vtheta,\u,p)$ fulfill after change of variables to $(\phi,\mu,s,\vtheta,\u,p)$ the following variational formulation for smooth test functions $\psi,\xi,\omega,\chi,\vv,q$: 
     \begin{align*}
       \la \dt\phi,\psi \ra &- \la \phi\u,\nabla\psi \ra + \la M\nabla\tfrac{\mu}{\vtheta},\nabla\psi \ra + \la N\tfrac{\mu}{\vtheta},\psi\ra = 0, 
       \\
       \la \mu,\xi \ra &- \la \partial_\phi e(\phi,s),\xi \ra - \la \partial_{\nabla\phi} e,\nabla\xi \ra  - \la \tfrac{1}{\vtheta}\partial_{\nabla\phi} e,\xi\nabla\vtheta \ra =0, 
       \\
       \la \dt s,\omega \ra &- \la s\u,\nabla\omega \ra - \la K\nabla\tfrac{1}{\vtheta},\nabla\tfrac{\omega}{\vtheta} \ra - \la M\nabla\tfrac{\mu}{\vtheta},\nabla\tfrac{\mu\omega}{\vtheta} \ra  - \la N\tfrac{\mu}{\vtheta},\tfrac{\mu\omega}{\vtheta} \ra - \la \tfrac{\eta\omega}{\vtheta} \snorm{\Du}^2\ra \\
       &- \la \dt\phi\partial_{\nabla\phi} e + \bm{\sigma}\cdot\u,\tfrac{1}{\vtheta}\nabla\omega \ra = 0,
       \\
       \la \vtheta,\chi \ra &- \la \partial_s e(\phi,s),\chi \ra = 0, 
       \\
       \la \dt\u,\vv \ra &+ c_{\mathrm{skw}}(\u,\u,\vv) + \la \eta\Du,\Dv \ra - \la p,\div(\vv) \ra + \la (\tfrac{1}{\vtheta}\bm{\sigma} + s\I)\nabla\vtheta,\vv \ra + \la \phi\nabla\mu,\vv\ra = 0,\\
       \la\div(\u),q\ra &= 0.
    \end{align*}
    
    Furthermore, conservation of mass, conservation of total energy and entropy production hold, i.e.
    \begin{align*}
     \frac{d}{dt}\la \phi,1\ra = 0 \text{ if } N=0, \qquad \frac{d}{dt}\la  e_{\mathrm{tot}},1\ra = 0, \qquad   \frac{d}{dt}\la s,1\ra = \la  \mathcal{P},1\ra \geq 0.
    \end{align*}
     The latter properties follow by using only the test functions $\xi=\dt\phi,\chi=\dt s,\psi\in\{1,\mu\},\omega\in\{1,\vtheta\},\vv=\u,q=p.$ 
    \end{lemma}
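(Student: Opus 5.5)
The proof has two parts: verifying the variational identities, then deriving the three structural properties using only the indicated test functions.

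\textbf{Variational identities.} The change of variables rests on assumption \ref{as:pot}. Strict concavity of $f$ in $\vtheta$ and concavity of $\kappa$ make $\vtheta\mapsto s(\phi,\nabla\phi,\vtheta)$ strictly increasing, since $\partial_\vtheta s=-\partial_\vtheta^2 f-\tfrac{\kappa''}{2}\snorm{\nabla\phi}^2>0$. So $\vtheta$ can be recovered from $s$, and $e$ can be viewed as a function of $(\phi,\nabla\phi,s)$ via the Legendre transform. The standard identities then give $\partial_s e=\vtheta$, $\partial_\phi e|_s=\partial_\phi f|_\vtheta$ and $\partial_{\nabla\phi}e|_s=\partial_{\nabla\phi}F|_\vtheta=\kappa(\vtheta)\nabla\phi$. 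This yields the fourth equation directly.

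The second equation comes from \eqref{eq:defmu} and \eqref{eq:mutrafo}. We write $-\vtheta\div(\tfrac{1}{\vtheta}\partial_{\nabla\phi}e)=-\div(\partial_{\nabla\phi}e)-\tfrac{1}{\vtheta}\partial_{\nabla\phi}e\cdot\nabla\vtheta$, multiply by $\xi$ and integrate by parts; periodicity removes all boundary terms.

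The first equation is \eqref{eq:sys1} tested with $\psi$.

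The momentum equation is \eqref{eq:sys2} tested with $\vv$, using three facts:
\begin{itemize}
\item the Korteweg expansion \eqref{eq:korteweg};
\item absorption of the gradient $\nabla[F-\phi\mu]$ into $p$;
\item equality of $\la(\u\cdot\nabla)\u,\vv\ra$ and $c_{\mathrm{skw}}$ for divergence-free $\u$.
\end{itemize}

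For the entropy equation we test \eqref{eq:entropyfull}, i.e.\ the form before the product rule, with $\omega$. We integrate $\div(K\nabla\tfrac1\vtheta)\tfrac{\omega}{\vtheta}$ and $\div(M\nabla\tfrac\mu\vtheta)\tfrac{\mu\omega}{\vtheta}$ by parts, and likewise the flux divergence, which produces $\la\dt\phi\,\partial_{\nabla\phi}e+\bm\sigma\cdot\u,\tfrac1\vtheta\nabla\omega\ra$. The signs have to be checked carefully here.

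\textbf{Structural properties.}

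\emph{Mass.} Setting $\psi=1$ with $N=0$ gives conservation of mass.

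\emph{Entropy.} Setting $\omega=1$ kills every $\nabla\omega$ term and the transport term, leaving $\tfrac{d}{dt}\la s,1\ra=\la\mathcal P,1\ra$. Here the $K$ and $M$ terms become $\la K\nabla\tfrac1\vtheta,\nabla\tfrac1\vtheta\ra$ and $\la M\nabla\tfrac\mu\vtheta,\nabla\tfrac\mu\vtheta\ra$, both nonnegative by the assumptions on the coefficients.

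\emph{Total energy.} By the chain rule,
\[
\tfrac{d}{dt}\la e,1\ra=\la\partial_\phi e,\dt\phi\ra+\la\partial_{\nabla\phi}e,\nabla\dt\phi\ra+\la\partial_s e,\dt s\ra .
\]
Using the second equation with $\xi=\dt\phi$ and the fourth with $\chi=\dt s$, this becomes $\la\mu,\dt\phi\ra-\la\tfrac1\vtheta\partial_{\nabla\phi}e,\dt\phi\nabla\vtheta\ra+\la\vtheta,\dt s\ra$.

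Next we take $\psi=\mu$ in the first equation and $\omega=\vtheta$ in the entropy equation. The dissipative terms cancel exactly:
\begin{itemize}
\item $\la K\nabla\tfrac1\vtheta,\nabla 1\ra=0$;
\item the $M\nabla\tfrac\mu\vtheta\cdot\nabla\mu$ terms appear in both equations with opposite signs;
\item the $N$ terms cancel in the same way;
\item the viscous term $\la\eta\snorm{\Du}^2,1\ra$ cancels against $\vv=\u$ in the momentum equation.
\end{itemize}
The term $\la\dt\phi\,\partial_{\nabla\phi}e,\tfrac1\vtheta\nabla\vtheta\ra$ cancels the leftover from the chain rule. The term $\la\bm\sigma\cdot\u,\tfrac1\vtheta\nabla\vtheta\ra$ cancels $\la\tfrac1\vtheta\bm\sigma\nabla\vtheta,\u\ra$ from the momentum equation; this needs the symmetry of $\bm\sigma$. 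The transport terms $-\la\phi\u,\nabla\mu\ra$ and $-\la s\u,\nabla\vtheta\ra$ cancel $\la\phi\nabla\mu,\u\ra$ and $\la s\nabla\vtheta,\u\ra$. Finally $c_{\mathrm{skw}}(\u,\u,\u)=0$, and the pressure term vanishes with $q=p$. Adding $\tfrac12\tfrac{d}{dt}\norm{\u}^2$ gives $\tfrac{d}{dt}\la e_{\mathrm{tot}},1\ra=0$.

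\textbf{Main obstacle.} The hard part is the bookkeeping in the entropy equation with the nonconstant test function $\omega$. Every product-rule term ($\tfrac{\omega}{\vtheta}$, $\tfrac{\mu\omega}{\vtheta}$, and the flux $\mathbf J_S$) must land precisely so that $\omega=\vtheta$ produces the cancellations above, including the correct sign conventions for the $K$ term. I would first verify this term by term in the strong form, and only then integrate by parts.
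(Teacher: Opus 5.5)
Your proposal follows the paper's proof essentially step for step. The shared steps are: the change of variables with $\partial_s e=\vtheta$; the Korteweg identity \eqref{eq:korteweg} with the gradient absorbed into $p$; integration by parts of the entropy equation; $\psi=1$ and $\omega=1$ for mass and entropy; and the chain rule with $\xi=\dt\phi$, $\chi=\dt s$, $\psi=\mu$, $\omega=\vtheta$, $\vv=\u$, $q=p$ for total energy. Two of your points are useful additions that the paper leaves implicit: the invertibility argument via $\partial_\vtheta s>0$, and the remark that the $\bm{\sigma}$-cancellation needs $\bm{\sigma}$ symmetric.

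One sign needs fixing in your derivation of the second equation. Write $\mathbf{a}=\partial_{\nabla\phi}e$. The product rule gives $-\vtheta\div\bigl(\tfrac1\vtheta\mathbf{a}\bigr)=-\div\mathbf{a}+\tfrac1\vtheta\mathbf{a}\cdot\nabla\vtheta$, consistent with \eqref{eq:mutrafo}, not $-\tfrac1\vtheta\mathbf{a}\cdot\nabla\vtheta$ as you wrote. With your sign, integration by parts yields $+\la\tfrac1\vtheta\partial_{\nabla\phi}e,\xi\nabla\vtheta\ra$ in the second equation. Then the two $\dt\phi\,\partial_{\nabla\phi}e$ terms in your energy balance would add rather than cancel. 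Your later use of the second equation has the correct (stated) sign, so this is a local slip, but it sits on exactly the term the energy cancellation depends on. A smaller point: \eqref{eq:entropyfull} is the form \emph{after} the product rule. What you actually test is the unnumbered display just before it, though both give the same weak form.
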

    
    \begin{proof}
     We replace \eqref{eq:sys3} by \eqref{eq:entropyfull} and introduce \eqref{eq:mutrafo} and $\vtheta=\partial_s e$ as new variables. Standard integration by parts, similarly to \cite{Hoehn25,Hoehn26} and usage of \eqref{eq:korteweg}, yields the variational formulation. Conservation of mass follows by $\psi=1$ and entropy production by $\omega=1$. For total energy conservation we recall that
     \begin{align*}
      \frac{d}{dt}\la e(\phi,\nabla\phi,s),1 \ra = \la \dt s, \partial_s e \ra + \la \dt\phi, \partial_\phi e \ra + \la \dt\nabla\phi,\partial_{\nabla\phi} e \ra  =  \la \dt s, \vtheta \ra + \la \dt\phi, \mu \ra - \la \tfrac{1}{\vtheta}\partial_{\nabla e}\nabla\vtheta,\dt\phi \ra
     \end{align*}
    Here we used the test functions $\chi=\dt s, \xi=\dt\phi$. Next we insert $\omega=\vtheta$ and $\psi=\mu$ to find
     \begin{align}
         \frac{d}{dt}\la e(\phi,\nabla\phi,s),1 \ra &= \la s\u,\nabla\vtheta \ra + \la M\nabla\tfrac{\mu}{\vtheta},\nabla\mu\ra  + \la N\tfrac{\mu}{\vtheta},\mu \ra + \la \eta \snorm{\Du}^2,1\ra \notag\\
       &+ \la\bm{\sigma}\cdot\u,\tfrac{1}{\vtheta}\nabla\vtheta \ra + \la \phi\u,\nabla\mu \ra - \la M\nabla\tfrac{\mu}{\vtheta},\nabla\mu \ra - \la N\tfrac{\mu}{\vtheta},\mu\ra \notag\\
       &=\la s\u,\nabla\vtheta \ra  + \la\bm{\sigma}\cdot\u,\tfrac{1}{\vtheta}\nabla\vtheta \ra + \la \phi\u,\nabla\mu \ra + \la \eta \snorm{\Du}^2,1\ra. \label{eq:weakent}
     \end{align}
     Next, we consider the temporal evolution of the kinetic energy using $\vv=\u$, $q=p$ and skew-symmetry yields
     \begin{align}
       \frac{d}{dt}\la\tfrac{1}{2}\snorm{\u}^2,1\ra = - \la \eta\Du,\Du \ra - \la (\tfrac{1}{\vtheta}\bm{\sigma} + s\I)\nabla\vtheta,\u \ra - \la \phi\nabla\mu,\u\ra   \label{eq:weakkin}
     \end{align}
     A combination of \eqref{eq:weakent} and \eqref{eq:weakkin} yields total energy conservation.
    \end{proof}
    
\section{Numerical Approximation:}

    Based on the variational formulation, we will now deduce a suitable fully-discrete approximation. The discretisation is based on the lowest-order Petrov-Galerkin approximation in time, cf. \cite{Egger,AndrewsA,AndrewsB} for an abstract framework. Such discretisations, although without the necessity of reverting Legendre transforms, are already developed for the Cahn-Hilliard equation, the Cahn-Hilliard-Navier-Stokes system, and a viscoelastic Cahn-Hilliard system, see \cite{BrunkCH,BrunkCHNS,BrunkCHQ}. We note that in these works, it is shown that the method is second-order in time. After explicit evaluation of most integrals in time, the Petrov-Galerkin scheme can be rewritten as a non-linear time-stepping scheme which shares many similarities with Crank-Nicolson or average vector-field approximations. To this end, we introduce the space-time approximation as follows.
    
    \noindent\textbf{Space approximation:}
      A conforming partition $\Th$ of the domain $\Omega$ into simplices of maximal diameter $h$ is considered and $\Th$ can be extended periodically to periodic extensions of $\Omega$. We introduce the standard finite element spaces via
            \begin{align*}
                \Vh &:= \{v \in H^1(\Omega)\cap C^0(\bar\Omega) : v|_K \in P_1(K),~\forall K \in \Th\},\\
                \Xh &:= \{v \in H^1(\Omega)^d\cap C^0(\bar\Omega)^d : v|_K \in P_2(K)^d,~ \forall K \in \Th\}, \\
                \Qh &:= \{v \in \Vh : \la v, 1\ra=0\}.
            \end{align*}
    \textbf{Time approximation:} 
    The time interval $[0,T]$ is partitioned into (uniformly) divided sub-intervals with step size $\tau>0$, and the time mesh $\Itau:=\{0=t^0,t^1=\tau,\ldots, t^{n_T}=T\}$ is introduced, where $n_T=\tfrac{T}{\tau}$ denotes the total number of time steps. We introduce the discrete time derivative and the mid point approximation via
            \begin{equation*}
                \partial_t^\tau g^{n+1} =  \frac{g^{n+1}-g^n}{\tau}, \qquad g^{n+1/2}= \frac{g^{n+1}+g^n}{2}.
            \end{equation*}
    Furthermore, we introduce the abbreviations 
    \begin{align*}
    X_{h}^{n+1/2}&:=X(\phi_{h}^{n+1/2},\vtheta_{h}^{n+1}) \text{ for } X\in\{M,N,K,\bm{\sigma}\} \\
    \partial_{\phi} e(\phi_h,s_h)&:=\frac{1}{\tau}\int_{t^n}^{t^{n+1}}  \partial_{\phi}e(I_1^\tau\phi_h(\ell),I_1^\tau s_h(\ell)) \,\mathrm{d}\ell, & I_1^\tau g(\ell) &:= \frac{g^{n+1}-g^n}{\tau}(\ell-t^n) + g^{n}\\
    \partial_{\nabla\phi} e(\phi_h,s_h) &:= \frac{1}{\tau}\int_{t^n}^{t^{n+1}}  \partial_{\nabla\phi}e(I_1^\tau\phi_h(\ell),I_1^\tau s_h(\ell)) \,\mathrm{d}\ell, & \partial_{s} e(\phi_h,s_h) &:=\frac{1}{\tau}\int_{t^n}^{t^{n+1}}  \partial_{s}e(I_1^\tau\phi_h(\ell),I_1^\tau s_h(\ell)) \,\mathrm{d}\ell.
    \end{align*}

    \begin{problem}\label{prob:scheme}
    Given the initial data $(\phi_{h}^0,s_h^0,\u_{h}^0)\in \Vh\times\Vh\times\Xh$. Find $(\phi_h^{n+1},\mu_h^{n+1},s_h^{n+1},\vtheta_h^{n+1},\u_h^{n+1},p_h^{n+1})\in  \Vh\times\Vh\times\Vh\times\Vh\times\Xh\times\Qh$  such that
    \begin{align}
       \la \dt^\tau\phi_h^{n+1},\psi \ra &- \la \phi_{h}^{n+1/2}\u_{h}^{n+1/2},\nabla\psi \ra + \la M_{h}^{n+1/2}\nabla \tfrac{\mu_{h}^{n+1}}{\vtheta_{h}^{n+1}} ,\nabla\psi \ra + \la N_{h}^{n+1/2}\tfrac{\mu_{h}^{n+1}}{\vtheta_{h}^{n+1}} ,\psi \ra= 0, \label{eq:schemeB1}
        \\
        \la \mu_{h}^{n+1},\zeta \ra & - \la \partial_{\phi}e(\phi_h,s_h),\zeta\ra - \la \partial_{\nabla\phi} e(\phi_h,s_h),\tfrac{1}{\vtheta_h^{n+1}}\nabla(\zeta\vtheta_h^{n+1}) \ra = 0, \label{eq:schemeB2}
        \\
        \la \dt^\tau s_h^{n+1},\omega \ra &- \la s_{h}^{n+1/2}\u_{h}^{n+1/2},\nabla\omega \ra - \la \eta(\phi_{h}^{n+1/2},\vtheta_{h}^{n+1})\snorm{\Du_{h}^{n+1/2}}^2,\tfrac{\omega}{\vtheta_{h}^{n+1}} \ra \notag
        \\
        &- \la K_{h}^{n+1/2}\nabla\tfrac{1}{\vtheta_{h}^{n+1}},\nabla\tfrac{\omega}{\vtheta_{h}^{n+1}}\ra - \la M_h^{n+1/2}\nabla\tfrac{\mu_{h}^{n+1}}{\vtheta_{h}^{n+1}},\nabla\tfrac{\omega\mu_{h}^{n+1}}{\vtheta_{h}^{n+1}} \ra - \la N_h^{n+1/2}\tfrac{\mu_{h}^{n+1}}{\vtheta_{h}^{n+1}},\tfrac{\omega\mu_{h}^{n+1}}{\vtheta_{h}^{n+1}} \ra \notag\\
        & - \la \dt^\tau\phi_h^{n+1}\partial_{\nabla\phi} e + \bm{\sigma}_{h}^{n+1/2}\cdot\u_{h}^{n+1/2},\tfrac{1}{\vtheta_{h}^{n+1}}\nabla\omega_{h,\tau} \ra= 0, \label{eq:schemeB3} 
        \\
        \la \vtheta_{h}^{n+1},\chi \ra &- \la \partial_{s} e(\phi_h,s_h),\chi  \ra= 0. \label{eq:schemeB4}
        \\
        \la \dt^\tau\u_h^{n+1},\vv \ra &+ \mathbf{c}_{skw}(\u_{h}^{n+1/2},\u_{h}^{n+1/2},\vv) + \la \eta(\phi_{h}^{n+1/2},\vtheta_{h}^{n+1})\Du_{h}^{n+1/2},\Dv \ra - \la p_{h}^{n+1},\div(\vv) \ra \notag
        \\
        &+ \la \phi_{h}^{n+1/2}\nabla\mu_{h}^{n+1}+ \left(\tfrac{1}{\vtheta_{h}^{n+1}}\bm{\sigma}_{h}^{n+1/2} +  s_{h}^{n+1/2}\right)\nabla\vtheta_{h}^{n+1},\vv \ra = 0 \label{eq:schemeB5},
        \\
        \la \div(\u_{h}^{n+1/2}),q \ra &= 0,  \label{eq:schemeB6}
    \end{align}
    for all $(\psi,\xi,\omega,\chi,\vv,q)\in\Vh\times\Vh\times\Vh\times\Vh\times\Xh\times\Qh.$
    \end{problem}
    \begin{theorem}
        Given solutions $(\phi_{h}^{n+1},\mu_{h}^{n+1},s_{h}^{n+1},\vtheta_{h}^{n+1},\u_{h}^{n+1},p_{h}^{n+1})$ with $\vtheta_h^k>0$ then we have the following properties
        \begin{align*}
           \la \phi_{h}^{n+1}, 1\ra &= \la \phi_{h}^{0}, 1\ra  \text{ (if applicable) }\qquad \la s_{h}^{n+1}, 1\ra = \la s_{h}^{0}, 1\ra + \tau\sum_{k=1}^{n_T-1}\mathcal{P}(\mu_{h}^{k+1}, \vtheta_{h}^{k+1},\u_{h}^{k+1}), 
           \\
           &\la e(\phi_h^{n+1},\nabla\phi_h^{n+1},\vtheta_h^{n+1}) + \tfrac{1}{2}\snorm{\u_{h}^{n+1}}^2, 1\ra = \la e(\phi_h^{0},\nabla\phi_h^{0},\vtheta_h^{0})+ \tfrac{1}{2}\snorm{\u_{h}^{0}}^2, 1\ra.
        \end{align*}
    
    \end{theorem}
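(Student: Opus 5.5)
The plan is to copy the continuous argument of Lemma~\ref{lem:var} at the discrete level, one time step at a time, and then sum over $n$. The key tool is the Petrov--Galerkin time discretisation. Since $I_1^\tau\phi_h$ and $I_1^\tau s_h$ are affine in time, the fundamental theorem of calculus and the chain rule give the exact identity
\begin{align*}
 \la e(\phi_h^{n+1},\nabla\phi_h^{n+1},s_h^{n+1}) - e(\phi_h^{n},\nabla\phi_h^{n},s_h^{n}),1\ra
 &= \tau\la \partial_\phi e(\phi_h,s_h),\dt^\tau\phi_h^{n+1}\ra + \tau\la \partial_{\nabla\phi} e(\phi_h,s_h),\nabla\dt^\tau\phi_h^{n+1}\ra \\
 &\quad + \tau\la \partial_s e(\phi_h,s_h),\dt^\tau s_h^{n+1}\ra .
\end{align*}
Here the time-averaged derivatives are exactly those appearing in \eqref{eq:schemeB2} and \eqref{eq:schemeB4}. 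This identity replaces the chain rule used in the proof of Lemma~\ref{lem:var}.

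\textbf{Mass and entropy.} Mass conservation follows by taking $\psi=1$ in \eqref{eq:schemeB1}. The convective and $M$-terms vanish because $\nabla 1=0$, and the $N$-term is absent in the case where mass conservation applies. For entropy, take $\omega=1\in\Vh$ in \eqref{eq:schemeB3}. The convective term and the flux term $\la\cdot,\tfrac{1}{\vtheta_h^{n+1}}\nabla\omega\ra$ vanish. Expanding $\nabla\tfrac{\mu}{\vtheta}$ and $\nabla\tfrac{1}{\vtheta}$ on the remaining terms leaves
\[
\la \dt^\tau s_h^{n+1},1\ra = \la \mathcal{P}_h^{n+1},1\ra,
\]
where $\mathcal{P}_h^{n+1}$ collects $K\snorm{\nabla\tfrac{1}{\vtheta_h^{n+1}}}^2$, $M\snorm{\nabla\tfrac{\mu_h^{n+1}}{\vtheta_h^{n+1}}}^2$, $N\snorm{\tfrac{\mu_h^{n+1}}{\vtheta_h^{n+1}}}^2$ and $\tfrac{\eta}{\vtheta_h^{n+1}}\snorm{\Du_h^{n+1/2}}^2$, with the coefficients evaluated as in the scheme. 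Multiplying by $\tau$ and summing over the time steps gives the entropy identity. Non-negativity follows from the assumptions and from $\vtheta_h^{n+1}>0$.

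\textbf{Total energy.} Test \eqref{eq:schemeB4} with $\chi=\dt^\tau s_h^{n+1}$, which turns the $\partial_s e$ term into $\la\vtheta_h^{n+1},\dt^\tau s_h^{n+1}\ra$. Test \eqref{eq:schemeB2} with $\zeta=\dt^\tau\phi_h^{n+1}$. Expanding $\tfrac{1}{\vtheta}\nabla(\zeta\vtheta)=\nabla\zeta+\tfrac{\zeta}{\vtheta}\nabla\vtheta$, this expresses
\[
\la\partial_\phi e,\dt^\tau\phi_h^{n+1}\ra+\la\partial_{\nabla\phi}e,\nabla\dt^\tau\phi_h^{n+1}\ra
\]
as $\la\mu_h^{n+1},\dt^\tau\phi_h^{n+1}\ra-\la\tfrac{1}{\vtheta_h^{n+1}}\partial_{\nabla\phi}e\cdot\nabla\vtheta_h^{n+1},\dt^\tau\phi_h^{n+1}\ra$.

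The main obstacle is the next step. We want to test \eqref{eq:schemeB3} with $\omega=\vtheta_h^{n+1}$ and \eqref{eq:schemeB1} with $\psi=\mu_h^{n+1}$. This requires $\vtheta_h^{n+1}\in\Vh$, which holds since it is the discrete variable of \eqref{eq:schemeB4}. It also requires the products $\tfrac{\omega\mu_h}{\vtheta_h}=\mu_h$ and $\tfrac{\omega}{\vtheta_h}=1$ to collapse exactly. They do, because the quotients are taken pointwise and not interpolated. With these choices the $K$-term vanishes, the $\eta$-term becomes $\la\eta\snorm{\Du_h^{n+1/2}}^2,1\ra$, and the $M$- and $N$-terms cancel against those in \eqref{eq:schemeB1}. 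The term $\la\dt^\tau\phi_h^{n+1}\partial_{\nabla\phi}e,\tfrac{1}{\vtheta_h^{n+1}}\nabla\vtheta_h^{n+1}\ra$ cancels the extra term coming from \eqref{eq:schemeB2}. I expect verifying these exact pointwise cancellations, with matching time levels throughout, to be the delicate part of the argument.

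What remains is
\[
\la s_h^{n+1/2}\u_h^{n+1/2},\nabla\vtheta_h^{n+1}\ra+\la\phi_h^{n+1/2}\u_h^{n+1/2},\nabla\mu_h^{n+1}\ra+\la\bm{\sigma}_h^{n+1/2}\u_h^{n+1/2},\tfrac{1}{\vtheta_h^{n+1}}\nabla\vtheta_h^{n+1}\ra+\la\eta\snorm{\Du_h^{n+1/2}}^2,1\ra .
\]
Finally, test \eqref{eq:schemeB5} with $\vv=\u_h^{n+1/2}$ and \eqref{eq:schemeB6} with $q=p_h^{n+1}$. This uses $\la\dt^\tau\u_h^{n+1},\u_h^{n+1/2}\ra=\dt^\tau\tfrac12\norm{\u_h^{n+1}}^2$ and the identity $c_{\mathrm{skw}}(\u,\u,\u)=0$. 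The coupling terms in \eqref{eq:schemeB5} are evaluated at exactly the same time levels as the leftovers above, so they cancel, as do the viscous terms. Adding everything gives
\[
\la e^{n+1}+\tfrac12\snorm{\u_h^{n+1}}^2,1\ra=\la e^{n}+\tfrac12\snorm{\u_h^{n}}^2,1\ra,
\]
and induction over $n$ yields the total energy statement.
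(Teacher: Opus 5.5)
Your proposal is correct and follows the paper's proof. It uses the exact chain-rule/fundamental-theorem identity for the affine interpolants $I_1^\tau\phi_h, I_1^\tau s_h$, followed by the testing procedure of Lemma~\ref{lem:var} with $\psi\in\{1,\mu_h^{n+1}\}$, $\omega\in\{1,\vtheta_h^{n+1}\}$, $\zeta=\dt^\tau\phi_h^{n+1}$, $\chi=\dt^\tau s_h^{n+1}$, $\vv=\u_h^{n+1/2}$ and $q=p_h^{n+1}$, which you carry out explicitly where the paper only refers back to the lemma. The Korteweg cancellation tacitly uses the symmetry of $\bm{\sigma}_h^{n+1/2}=\kappa(\vtheta_h^{n+1})\nabla\phi_h^{n+1/2}\otimes\nabla\phi_h^{n+1/2}$, and your reading of the conserved energy as $e(\phi_h,\nabla\phi_h,s_h)$, rather than with $\vtheta_h$ as written in the statement, is what the argument actually proves.
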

    
    \begin{proof}
     Conservation of mass follows directly by using $\psi=1$ in \eqref{eq:schemeB1}. Entropy production is obtained by inserting $\omega=1$ into \eqref{eq:schemeB3}. Total energy conservation is obtained as follows
     \begin{align*}
    \frac{1}{\tau}&\la e_h^{n+1} - e_h^n,1 \ra = \frac{1}{\tau}\int_{t^n}^{t^{n+1}} \la \dt e(I_1^\tau\phi,I_1^\tau s), 1\ra \\
    &= \frac{1}{\tau}\int_{t^n}^{t^{n+1}} \la \dt I_1^\tau s(t), \partial_s e(I_1^\tau\phi,I_1^\tau s) \ra + \la \dt I_1^\tau \phi, \partial_\phi e(I_1^\tau\phi,I_1^\tau s) \ra + \la \dt I_1^\tau \nabla\phi, \partial_{\nabla\phi} e(I_1^\tau\phi,I_1^\tau s) \ra
    \\
    &= \int_{t^n}^{t^{n+1}} \la \dt^\tau s_h^{n+1},\partial_{s} e(\phi_h,s_h)\ra + \la\dt^\tau \phi_h^{n+1},\partial_{\phi} e(\phi_h,s_h)\ra + \la\dt^\tau \nabla\phi_h^{n+1},\partial_{\nabla\phi} e(\phi_h,\nabla\phi_h)\ra.
    \end{align*}
    Here we used that $\partial_t I_1^\tau g = \dt^\tau g_h^{n+1}$ on $[t^n,t^{n+1}]$. At this point we perform the same testing procedure as in Lemma \ref{lem:var} to obtain total energy conservation.
    \end{proof}

    \begin{remark}
        The variational formulation and the resulting numerical method can be generalised accounting for cross-coupling terms between the phase-field and the temperature equation, i.e. effects related to Soret and Dufour effects, cf. \cite{Hoehn25}. Furthermore, the mobility and heat conductivity can be generalised to matrix-valued functions and also the possible cross-coupling terms to matrix- or vector-valued functions. Furthermore, also without further problems to extension to different boundary conditions as in \cite{Hoehn26} is possible.
    \end{remark}

\section{Numerical example}
    In this section we will first perform a convergence analysis and afterwards consider application-inspired examples similar to \cite{Hoehn26}.
    The scheme described in the previous section, namely \eqref{eq:schemeB1}--\eqref{eq:schemeB6}, is implemented in NGSolve \cite{schoberl2014c++}. The non-linear system is solved using Newton's method with an absolute tolerance of $10^{-8}$ for the $L^2$ norm of the updates. The computational domain is the two-dimensional region $\Omega = (0,1)^2$, where $(x,y)$ denotes a point in $ \Omega$.
    
    To this end we consider two generic Helmholtz free energies of the form
    \begin{align*}
        F_i(\phi,\nabla\phi,\vtheta) = \frac{\kappa_i(\vtheta)}{2}\snorm{\nabla\phi}^2 - \vtheta(\log(\vtheta)-1) + \vtheta f_1(\phi) + f_2(\phi)
    \end{align*}
    for the to generic choices $\kappa_A(\vtheta)=\kappa_A$ and $\kappa_B(\vtheta)=\kappa_B\vtheta$. In both cases we can compute entropy and energy as
    \begin{align*}
        s_A &= \log(\vtheta) - f_1(\phi), & s_B &= -\frac{\kappa_1}{2}\snorm{\nabla\phi}^2 + \log(\vtheta) - f_1(\phi), \\
        e_A &= \vtheta + f_2(\phi) + \frac{\kappa_0}{2}\snorm{\nabla\phi}^2, & e_B &= \vtheta + f_2(\phi),
    \end{align*}  
    This can be inverted by standard algebraic manipulations and yields
    \begin{align*}
    e_A(\phi,\nabla\phi,s) := \exp\Big(s+f_1(\phi)\Big) + f_2(\phi) + \frac{\kappa_0}{2}\snorm{\nabla\phi}^2, \quad  e_B(\phi,\nabla\phi,s) := \exp\left(\frac{\kappa_1}{2}\snorm{\nabla\phi}^2+s+f_1(\phi)\right) + f_2(\phi).    
    \end{align*}
    In accordance with \cite{Hoehn26} with reference temperature $1$ we choose
    \begin{equation*}
     f_1(\phi) =  -H_{cf}\phi^2(1-\phi)^2 - H(\phi), \quad f_2(\phi)= (H_{pt}+H_{cf})\phi^2(1-\phi)^2 +  H(\phi)  
    \end{equation*}
    with the interpolation function $H(\phi)=\phi^3(6\phi^2-15\phi+10)$, which is extended outside of $\phi\in[0,1]$ by zero or one. The viscosity $\eta$ is defined as $\eta(\phi)=\eta_l\eta_s(H_\phi(\phi)(\eta_s-\eta_l)+\eta_l)^{-1}$
    where $\eta_l$ and $\eta_s$ denote the viscosities of the liquid and solid phase, respectively. The remaining parameters are given as follows: $\M=0$, $\K=0.01$, $N=10$, $\eta_s=1$, $\eta_l=0.001$, $\kappa_A=\kappa_B=0.025^2$, $H_\mathrm{pt}=1$, $H_\mathrm{cf}=0.1.$

\subsection{Convergence test}
We only examine spatial convergence by fixing the time step size to $\tau=10^{-4}$ with an end time of $T=0.01$, and consider spatial resolutions of $h_k=2^{-2-k}$ for $k=0,\ldots,6$. The initial conditions are chosen as follows:
\begin{align*}
    \phi_0(x,y)&:=2^{-2}\left(\tanh\left(\frac{\sqrt{(x-0.75)^2+(y-0.75)^2}-0.15)}{3.25\cdot 10^{-2}}\right)\right.\\
    &\qquad-\left.\tanh\left(\frac{\sqrt{(x-0.25)^2+(y-0.25)^2}-0.15)}{3.25\cdot 10^{-2}}\right)+2\right),\\
    \vtheta_0(x,y)&:=\exp\left(\ln(0.5)\cdot0.5(\sin(4\pi x)\sin(4\pi y)+1)(\sin(2\pi x)+\sin(2\pi y))\right),\\
    s_0(x,y) &= s_i(\phi_0(x,y),\vtheta_0(x,y)), \text{ for } i\in\{A,B\},\\
    \u_0(x,y) &= 10^{-1}\big(-\sin(\pi x)^2\sin(2\pi y),\sin(2\pi x)\sin(\pi y)^2\big)^{\top}.
\end{align*}
        
The errors are divided into two sets, and since no analytical solution is available, they are computed with respect to the corresponding refined solution. The errors of the first set, related to the $L^\infty$ norm in time, are given as:
\begin{align*}
    \mathrm{err}(\nabla\phi,h_k)&:= \max_{n\in\Itau}\norm{\phi^n_{h_k}-\phi^n_{h_{k+1}}}_{H^1},
    &
    \mathrm{err}(s,h_k)&:=\max_{n\in\Itau}\norm{s^n_{h_k}-s^n_{h_{k+1}}}_{L^2},
    \\
    \mathrm{err}(\u,h_k)&:=\max_{n\in\Itau}\norm{\u^n_{h_k}-\u^n_{h_{k+1}}}_{L^2},
\end{align*}
while the errors of the second set, related to the $L^2$ norm in time, are defined as:
\begin{align*}
    \mathrm{err}(\mu,h_k)&:=\sqrt{\tau\sum_{n=1}^{n_\tau}\norm{\mu^n_{h_k}-\mu^n_{h_{k+1}}}_{L^2}^2},
    &
    \mathrm{err}(\nabla\vtheta,h_k)&:=\sqrt{\tau\sum_{n=1}^{n_\tau}\norm{\vtheta^n_{h_k}-\vtheta^n_{h_{k+1}}}_{H^1}^2}
    \\
    \mathrm{err}(\nabla\u,h_k)&:=\sqrt{\tau\sum_{n=1}^{n_\tau}\norm{\u^n_{h_k}-\u^n_{h_{k+1}}}_{H^1}^2},
    &
    \mathrm{err}(p,h_k)&:=\sqrt{\tau\sum_{n=1}^{n_\tau}\norm{p^n_{h_k}-p^n_{h_{k+1}}}_{L^2}^2}.
\end{align*}

\begin{table}[htbp!]
    \centering
    %\small
    \caption{Errors and rates in $L^\infty$ time norms for spatial convergence for $i=A$.\label{tab:1}} 
    %\resizebox{\columnwidth}{!}{
    \begin{tabular}{c|l|c|l|c|l|c}
           $k$ & $\mathrm{err}(\nabla\phi,h_k)$ & $\mathrm{eoc}_k$  & $\mathrm{err}(s,h_k)$ & $\mathrm{eoc}_k$  & $\mathrm{err}(\u,h_k)$ & $\mathrm{eoc}_k$ \\
           \hline
            0 & $8.51\cdot 10^{-1}$ & -- & $1.26\cdot 10^{-1}$ & -- & $4.76\cdot 10^{-3}$ & --\\
            1 & $1.25\cdot 10^{0}$ & $-0.56$ & $7.07\cdot 10^{-2}$ & $0.83$ & $4.84\cdot 10^{-3}$ &
            $-0.03$\\
            2 & $6.85\cdot 10^{-1}$ & $0.87$ & $2.11\cdot 10^{-2}$ & $1.75$ & $2.70\cdot 10^{-3}$ &
            $0.84$\\
            3 & $4.02\cdot 10^{-1}$ & $0.77$ & $7.73\cdot 10^{-3}$ & $1.45$ & $8.11\cdot 10^{-4}$ &
            $1.74$\\
            4 & $2.08\cdot 10^{-1}$ & $0.95$ & $1.96\cdot 10^{-3}$ & $1.98$ & $1.11\cdot 10^{-4}$ &
            $2.87$\\
            5 & $1.05\cdot 10^{-1}$ & $0.99$ & $5.00\cdot 10^{-4}$ & $1.97$ & $1.90\cdot 10^{-5}$ &
            $2.54$
    \end{tabular}%}
\end{table}

\begin{table}[htbp!]
    \centering
    %\small
    \caption{Errors and rates in $L^2$ time norms for spatial convergence for $i=A$.\label{tab:2}} 
    %\resizebox{\columnwidth}{!}{
    \begin{tabular}{c|l|c|l|c|l|c|l|c}
        $k$ & $\mathrm{err}(\mu,h_k)$ & $\mathrm{eoc}_k$  & $\mathrm{err}(\nabla\vtheta,h_k)$
         & $\mathrm{eoc}_k$  & $\mathrm{err}(\nabla\u,h_k)$ & $\mathrm{eoc}_k$  & $\mathrm{err}(p,h_k)$ & $\mathrm{eoc}_k$ \\
         \hline
        0 & $2.68\cdot 10^{-2}$ & -- & $2.95\cdot 10^{-1}$ & -- & $1.16\cdot 10^{-2}$ & -- & $6.
        05\cdot 10^{-3}$ & --\\
        1 & $1.03\cdot 10^{-2}$ & $1.37$ & $2.27\cdot 10^{-1}$ & $0.38$ & $1.48\cdot 10^{-2}$ &
        $-0.36$ & $7.48\cdot 10^{-3}$ & $-0.31$\\
        2 & $5.35\cdot 10^{-3}$ & $0.95$ & $1.17\cdot 10^{-1}$ & $0.95$ & $1.46\cdot 10^{-2}$ &
        $0.03$ & $2.69\cdot 10^{-3}$ & $1.47$\\
        3 & $1.68\cdot 10^{-3}$ & $1.67$ & $6.01\cdot 10^{-2}$ & $0.97$ & $9.67\cdot 10^{-3}$ &
        $0.59$ & $7.85\cdot 10^{-4}$ & $1.78$\\
        4 & $4.00\cdot 10^{-4}$ & $2.07$ & $2.94\cdot 10^{-2}$ & $1.03$ & $3.26\cdot 10^{-3}$ &
        $1.57$ & $1.94\cdot 10^{-4}$ & $2.02$\\
        5 & $1.13\cdot 10^{-4}$ & $1.83$ & $1.45\cdot 10^{-2}$ & $1.02$ & $5.81\cdot 10^{-4}$ &
        $2.49$ & $5.44\cdot 10^{-5}$ & $1.83$
    \end{tabular}%}
\end{table}

\begin{table}[htbp!]
    \centering
    %\small
    \caption{Errors and rates in $L^\infty$ time norms for spatial convergence for $i=B$.\label{tab:3}} 
    %\resizebox{\columnwidth}{!}{
    \begin{tabular}{c|l|c|l|c|l|c|l|c}
        $k$ & $\mathrm{err}(\nabla\phi,h_k)$ & $\mathrm{eoc}_k$  & $\mathrm{err}(s,h_k)$ & $\mathrm{eoc}_k$  & $\mathrm{err}(\u,h_k)$ & $\mathrm{eoc}_k$ \\
           \hline
        0 & $8.60\cdot 10^{-1}$ & -- & $1.26\cdot 10^{-1}$ & -- & $4.47\cdot 10^{-3}$ & --\\
        1 & $1.26\cdot 10^{0}$ & $-0.55$ & $7.09\cdot 10^{-2}$ & $0.83$ & $4.53\cdot 10^{-3}$ &
        $-0.02$\\
        2 & $6.86\cdot 10^{-1}$ & $0.88$ & $2.16\cdot 10^{-2}$ & $1.71$ & $2.35\cdot 10^{-3}$ &
        $0.95$\\
        3 & $4.03\cdot 10^{-1}$ & $0.77$ & $8.63\cdot 10^{-3}$ & $1.33$ & $7.35\cdot 10^{-4}$ &
        $1.67$\\
        4 & $2.08\cdot 10^{-1}$ & $0.95$ & $2.56\cdot 10^{-3}$ & $1.75$ & $1.21\cdot 10^{-4}$ &
        $2.60$\\
        5 & $1.05\cdot 10^{-1}$ & $0.99$ & $6.97\cdot 10^{-4}$ & $1.88$ & $2.20\cdot 10^{-5}$ &
        $2.46$
    \end{tabular}%}
\end{table}

\begin{table}[htbp!]
    \centering
    %\small
    \caption{Errors and rates in $L^2$ time norms for spatial convergence for $i=B$.\label{tab:4}} 
    %\resizebox{\columnwidth}{!}{
    \begin{tabular}{c|l|c|l|c|l|c|l|c|l|c}
        $k$ & $\mathrm{err}(\mu,h_k)$ & $\mathrm{eoc}_k$  & $\mathrm{err}(\nabla\vtheta,h_k)$
         & $\mathrm{eoc}_k$  & $\mathrm{err}(\nabla\u,h_k)$ & $\mathrm{eoc}_k$  & $\mathrm{err}(p,h_k)$ & $\mathrm{eoc}_k$ \\
         \hline
        0 & $2.53\cdot 10^{-2}$ & -- & $2.96\cdot 10^{-1}$ & -- & $1.10\cdot 10^{-2}$ & -- & $6.
        83\cdot 10^{-3}$ & --\\
        1 & $9.92\cdot 10^{-3}$ & $1.35$ & $2.28\cdot 10^{-1}$ & $0.38$ & $1.30\cdot 10^{-2}$ &
        $-0.25$ & $6.99\cdot 10^{-3}$ & $-0.03$\\
        2 & $5.26\cdot 10^{-3}$ & $0.92$ & $1.21\cdot 10^{-1}$ & $0.92$ & $1.30\cdot 10^{-2}$ &
        $0.00$ & $2.80\cdot 10^{-3}$ & $1.32$\\
        3 & $1.86\cdot 10^{-3}$ & $1.50$ & $6.41\cdot 10^{-2}$ & $0.91$ & $8.51\cdot 10^{-3}$ &
        $0.61$ & $8.74\cdot 10^{-4}$ & $1.68$\\
        4 & $4.53\cdot 10^{-4}$ & $2.04$ & $3.23\cdot 10^{-2}$ & $0.99$ & $3.30\cdot 10^{-3}$ &
        $1.37$ & $2.41\cdot 10^{-4}$ & $1.86$\\
        5 & $1.27\cdot 10^{-4}$ & $1.83$ & $1.52\cdot 10^{-2}$ & $1.09$ & $5.97\cdot 10^{-4}$ &
        $2.46$ & $7.09\cdot 10^{-5}$ & $1.76$
    \end{tabular}%}
\end{table}
The results for the test with $i=A$ can be found in Tables \ref{tab:1} and \ref{tab:2} while the results for $i=B$ can be found in Tables \ref{tab:3} and \ref{tab:4}, where the experimental order of convergence (eoc) is shown.
We observe an optimal or near-optimal spatial order in both cases. Only the orders of $\u_h,\mu_h,p_h$ are  below the expectation, which might be expected. For the velocity, the optimal order is three in $L^2$, the rates might be reduced due to the lower order coupling with the remaining piecewise linear discretisation, which also then reduces the rate of the pressure below two. For the chemical potential, the exaction in the $L^2$ norm is two. However, this variable appears in the entropy production as $\frac{\mu}{\vtheta}$, so the pure $L^2-$ norm is not a canonical energy norm.

\subsection{Application driven test}

We consider the temporal evolution of the above convergence test without inital flow field, which allows us to account for the effects of melting and solidification side by side, as in \cite{Hoehn26}. For the phase variable $\phi$, we consider a mixture of solid and melt around $\phi=0.5$ which contains a solidified phase in the right upper corner, i.e. $\phi=0$, and a melted region in the left lower corner, i.e. $\phi=1$. For $\vtheta$, we impose a profile that allows simultaneous overheating and undercooling in both regions, with values around the melting temperature, here $1$, in the rest of the domain. The temporal evolution of the volume fraction is depicted in Figure \ref{fig:ex_melt_phi} and the evolution of the temperature derivation, i.e. $\vtheta_h-1$, is visualised in Figure \ref{fig:ex_melt_theta}. The entropy evolution and the total energy error are visualised in Figure \ref{fig:ex_melt_struct} and demonstrate entropy production and total energy conservation, within Newton's tolerance. The temporal evolution for $i=A$ aligns with the results in \cite{Hoehn26}, and the evolution for $i=B$ is very similar, although a different free energy is used. We expect that the results of both simulations are comparable as the interface width is comparable for $\vtheta_h\approx 1$.

\begin{figure}[htbp!]
            \centering
            \begin{tabular}{c@{}c@{}c@{}c@{}c@{}}
                \multicolumn{5}{c}{\includegraphics[trim={40cm 66cm 40cm 0cm},clip,scale=0.15]{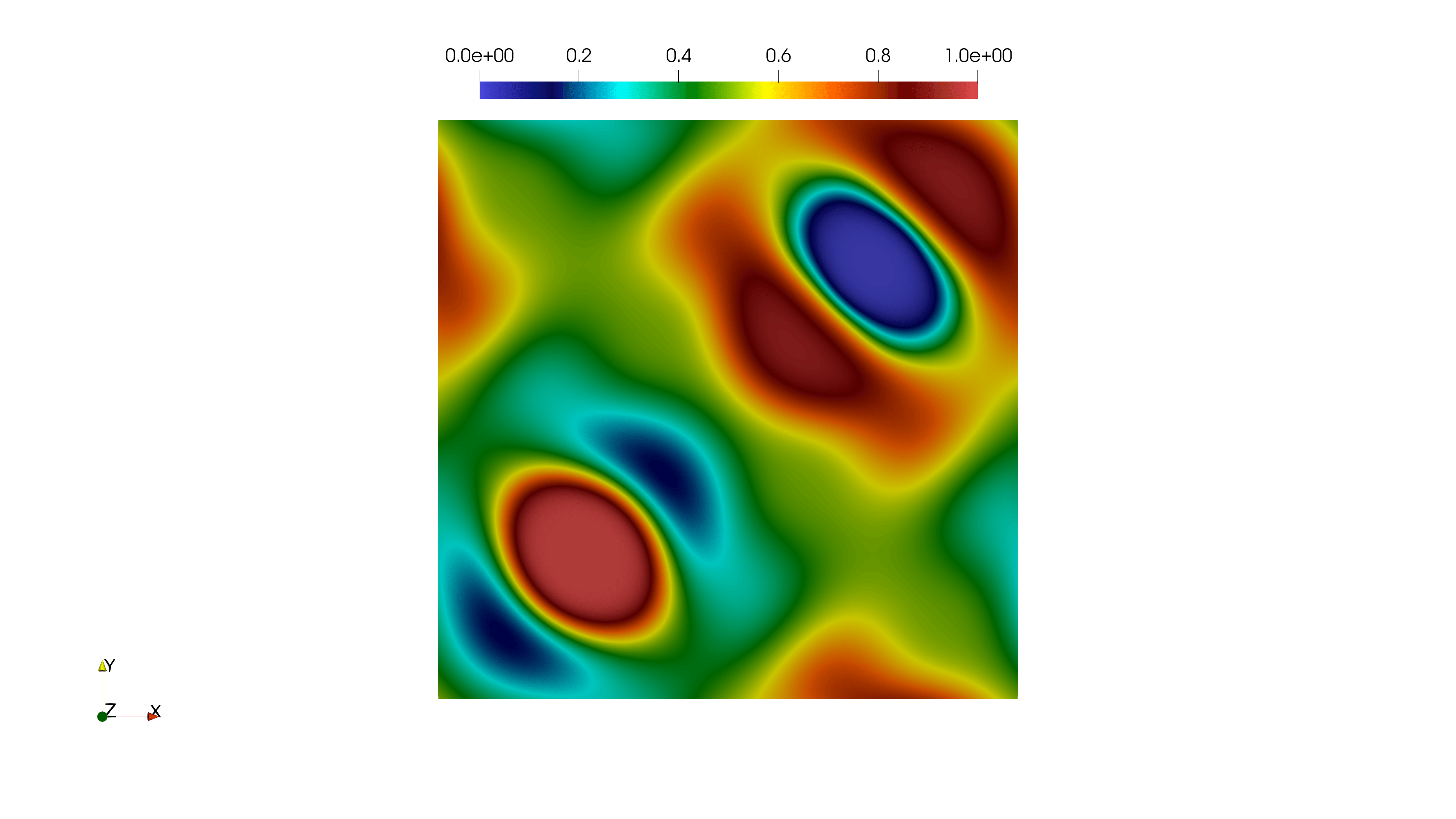}}\\
                \includegraphics[trim={40cm 10cm 40cm 10cm},clip,scale=0.06]{Bilder/phi_A.0010.png}
                &
                \includegraphics[trim={40cm 10cm 40cm 10cm},clip,scale=0.06]{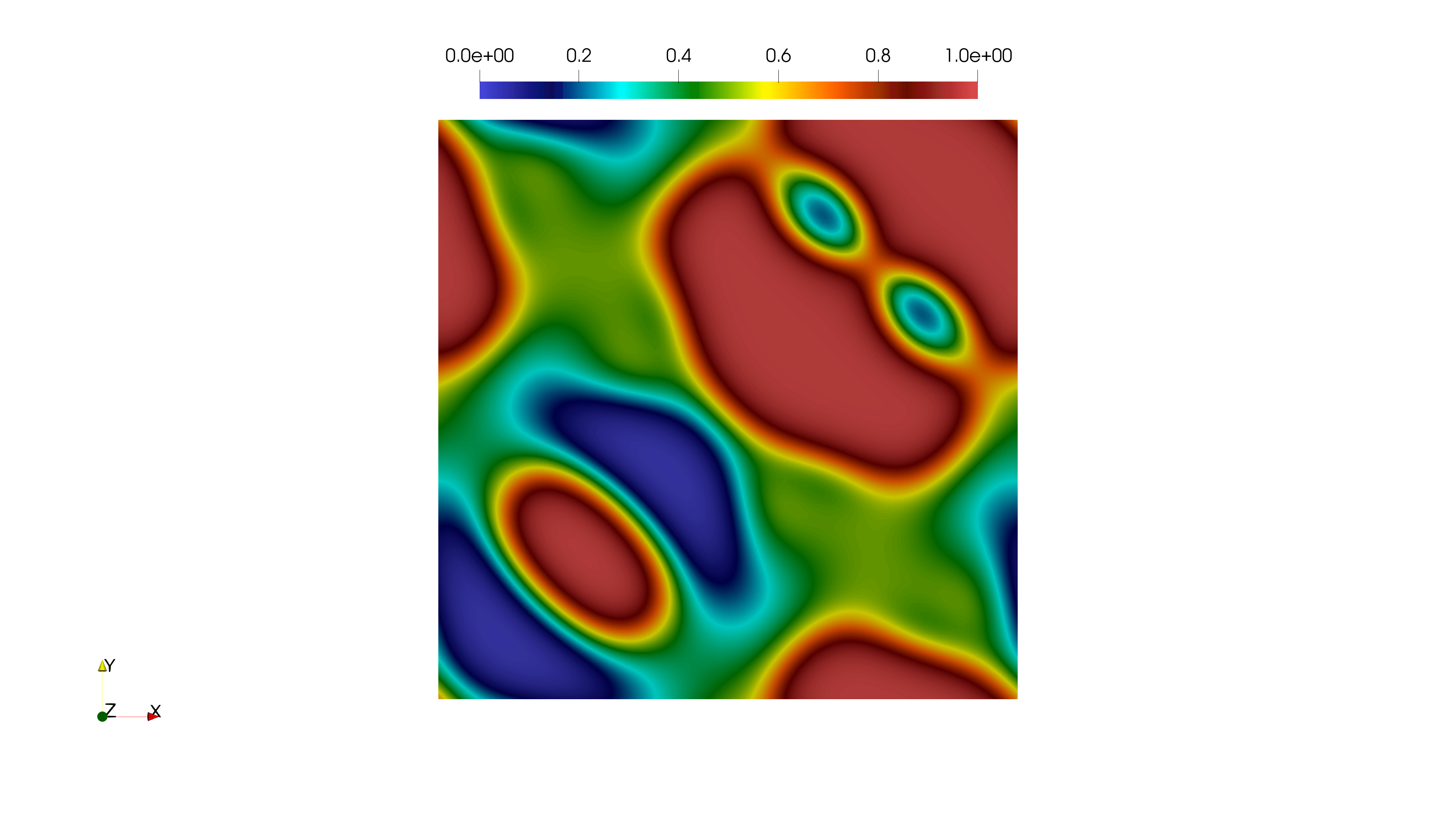} 
                &
                \includegraphics[trim={40cm 10cm 40cm 10cm},clip,scale=0.06]{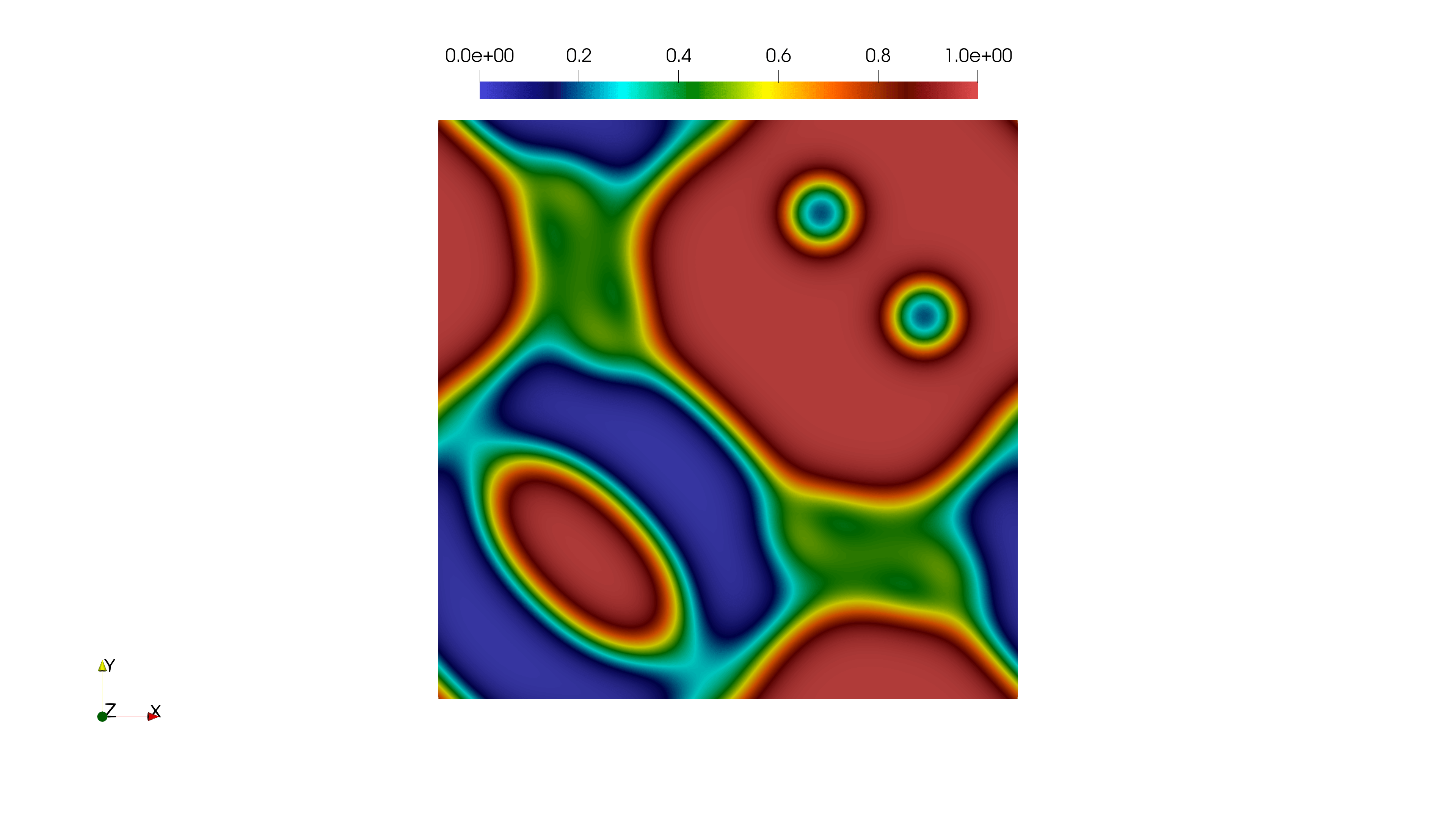}
                &
                \includegraphics[trim={40cm 10cm 40cm 10cm},clip,scale=0.06]{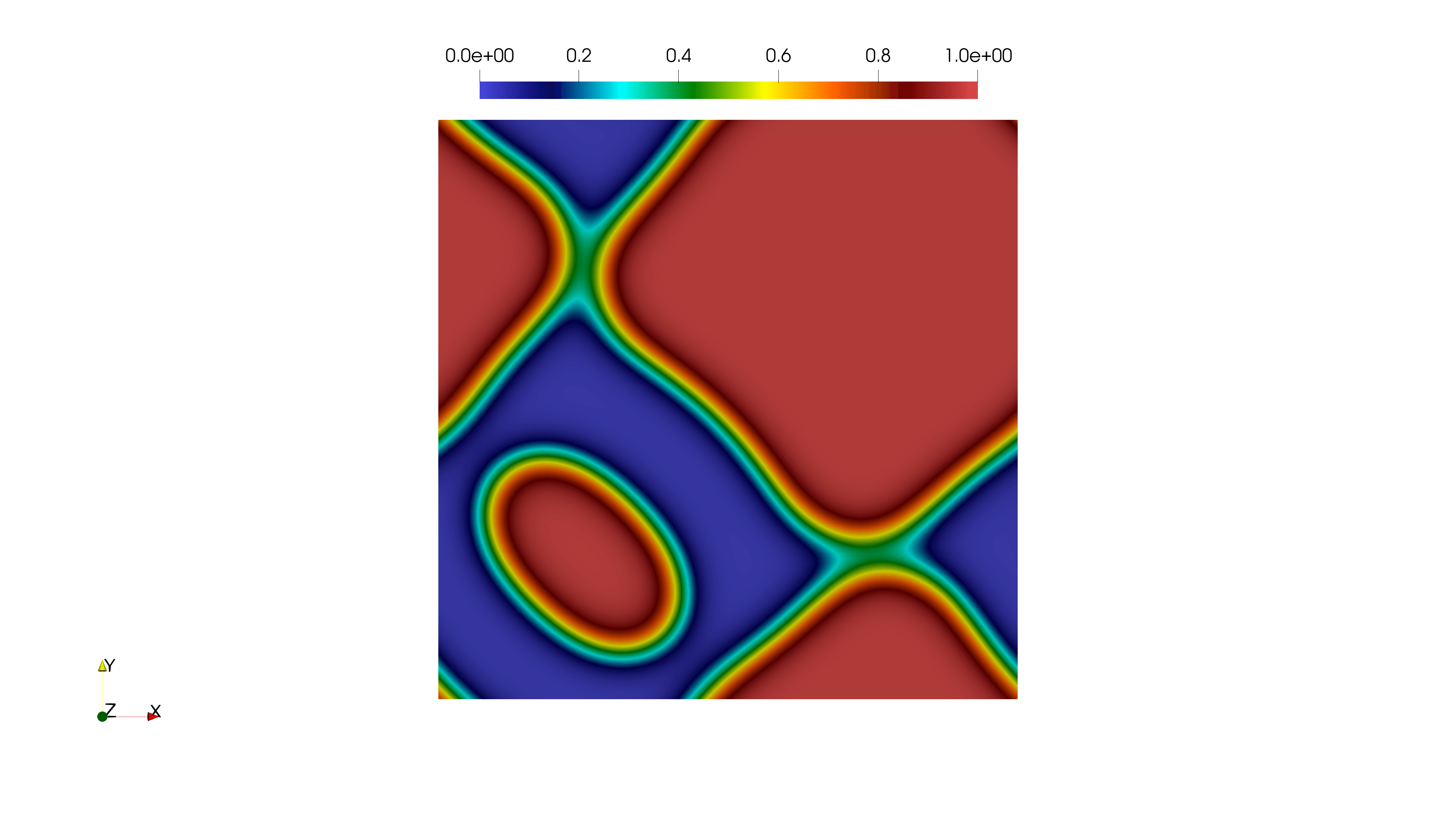} 
                &
                \includegraphics[trim={40cm 10cm 40cm 10cm},clip,scale=0.06]{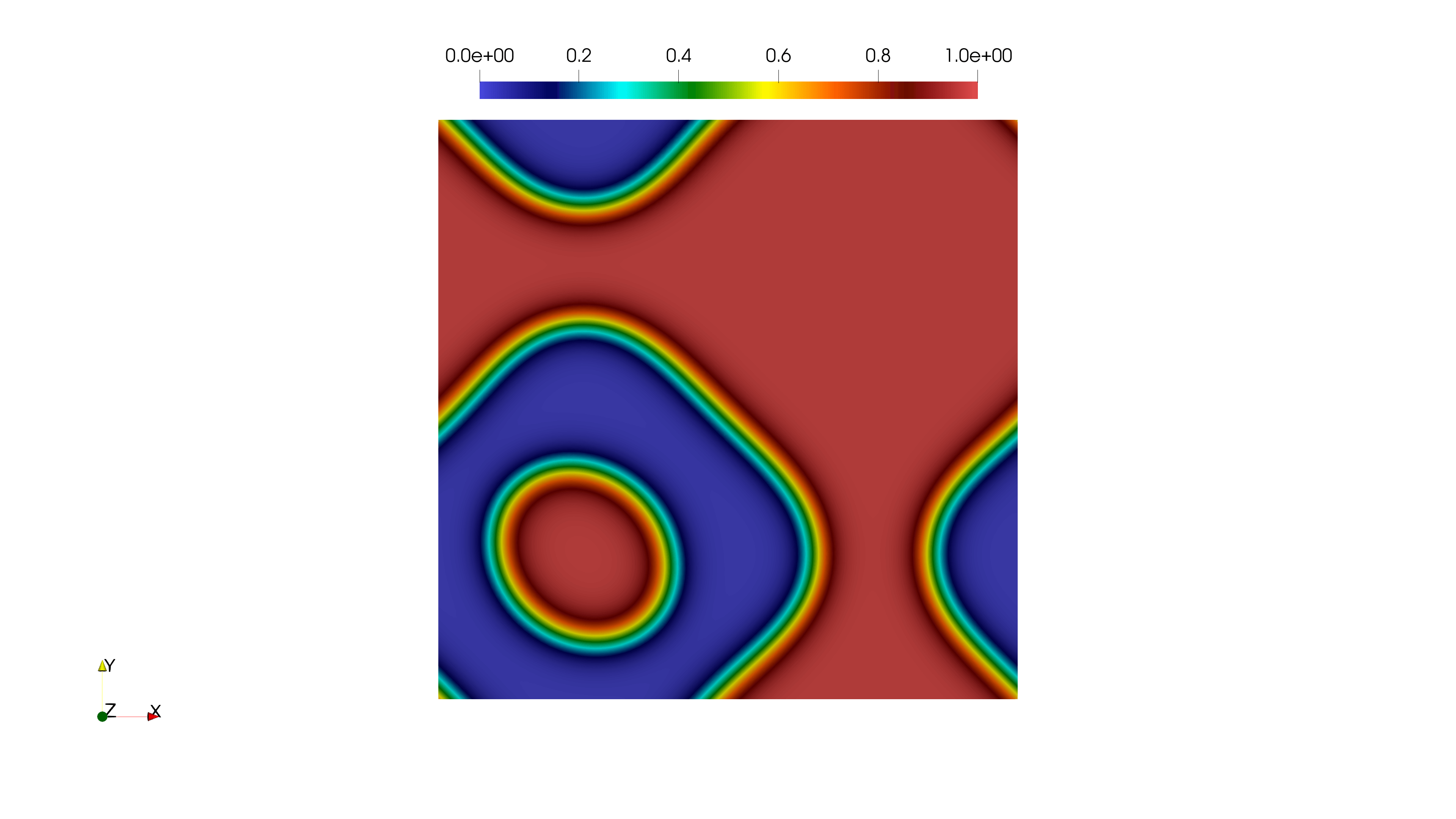} \\
                \includegraphics[trim={40cm 10cm 40cm 10cm},clip,scale=0.06]{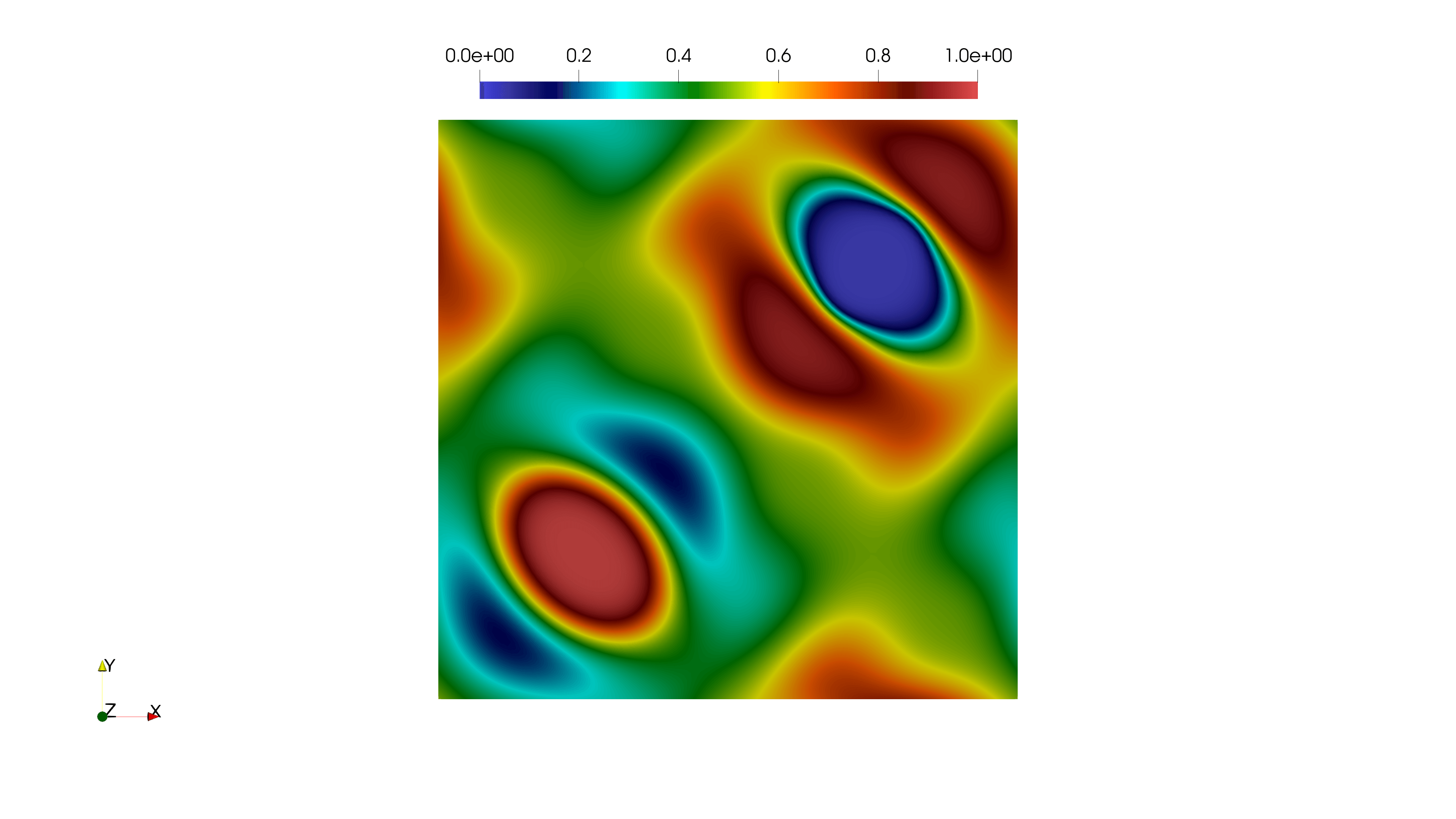}
                &
                \includegraphics[trim={40cm 10cm 40cm 10cm},clip,scale=0.06]{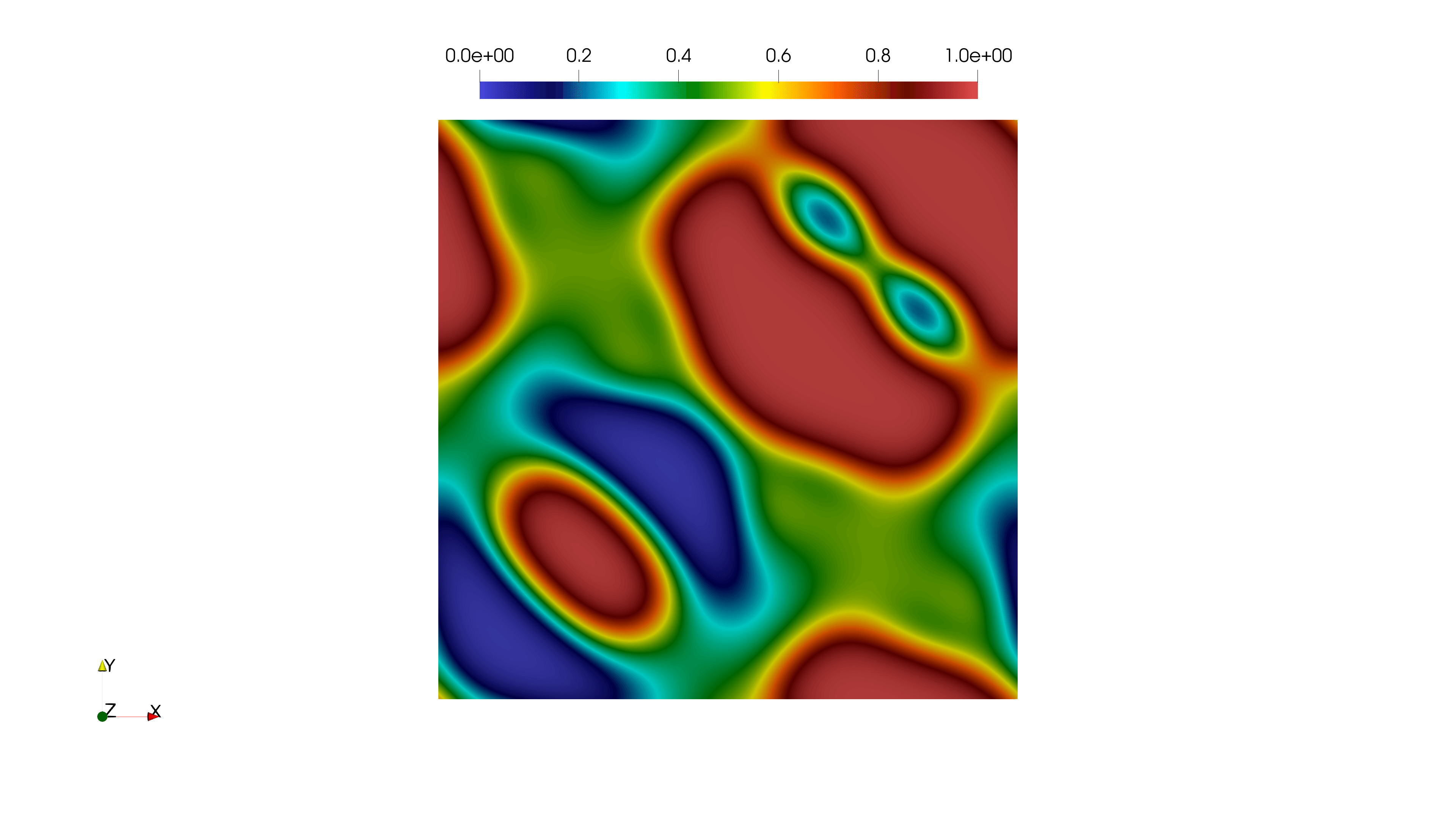} 
                &
                \includegraphics[trim={40cm 10cm 40cm 10cm},clip,scale=0.06]{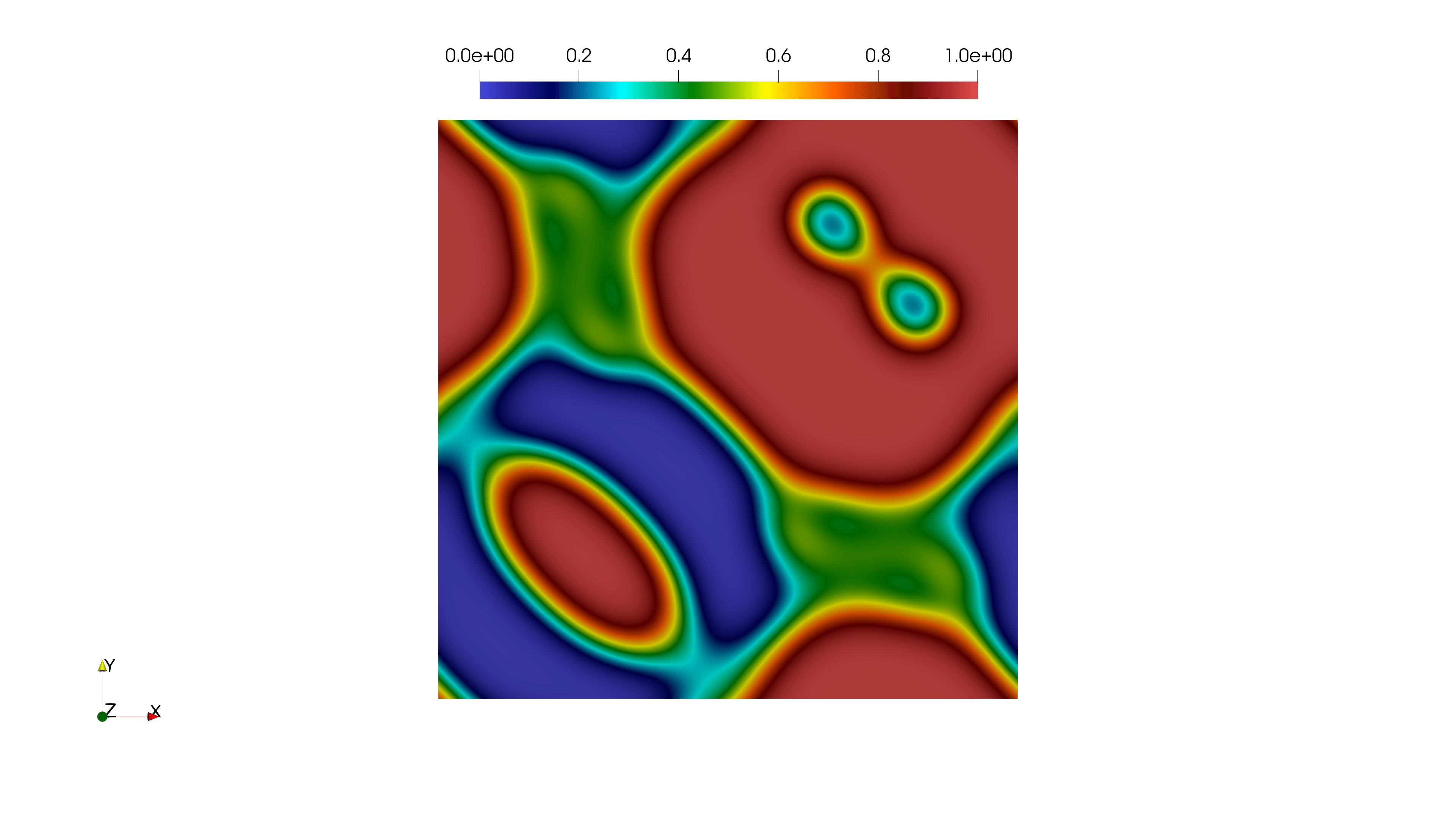}
                &
                \includegraphics[trim={40cm 10cm 40cm 10cm},clip,scale=0.06]{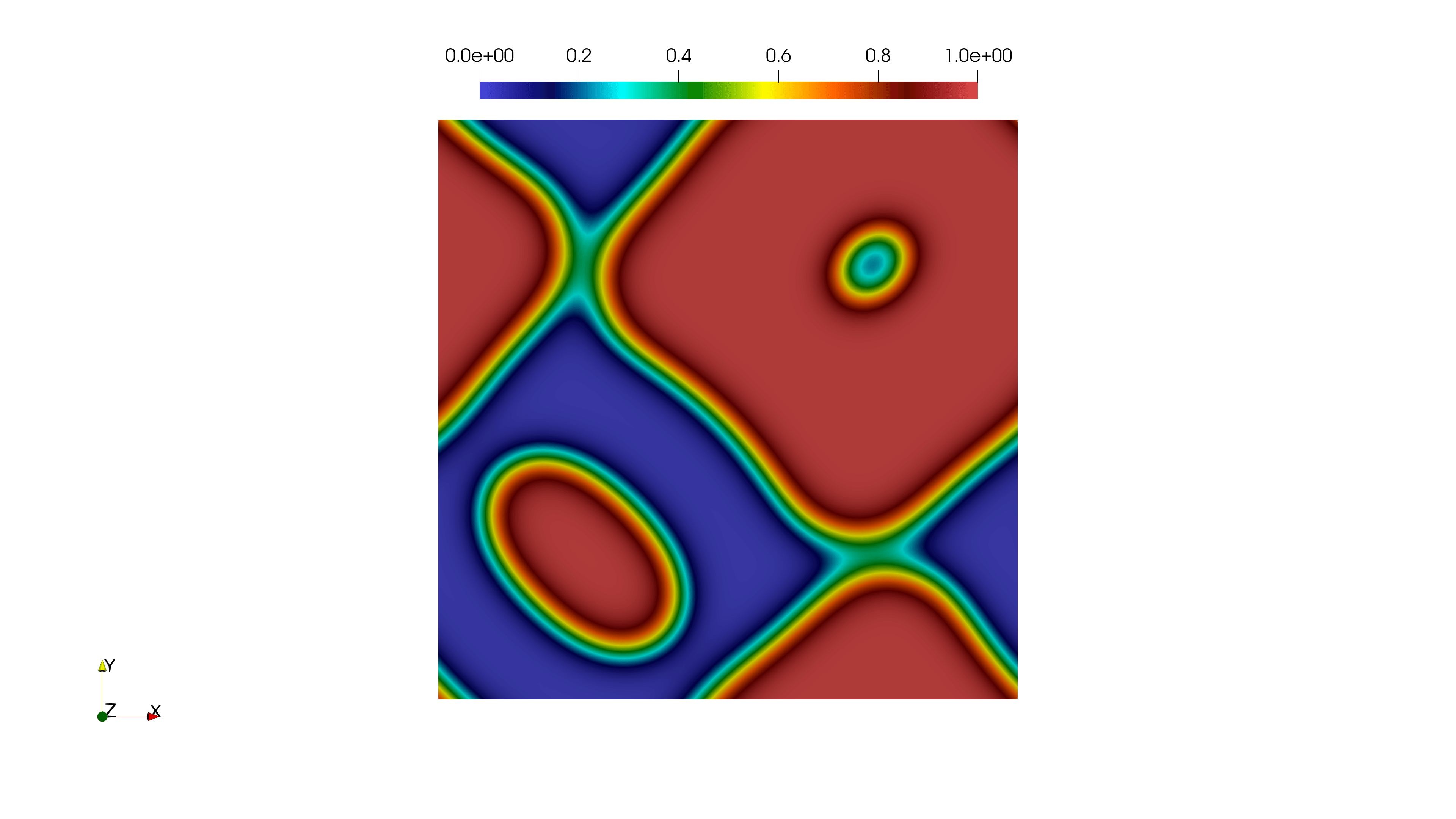} 
                &
                \includegraphics[trim={40cm 10cm 40cm 10cm},clip,scale=0.06]{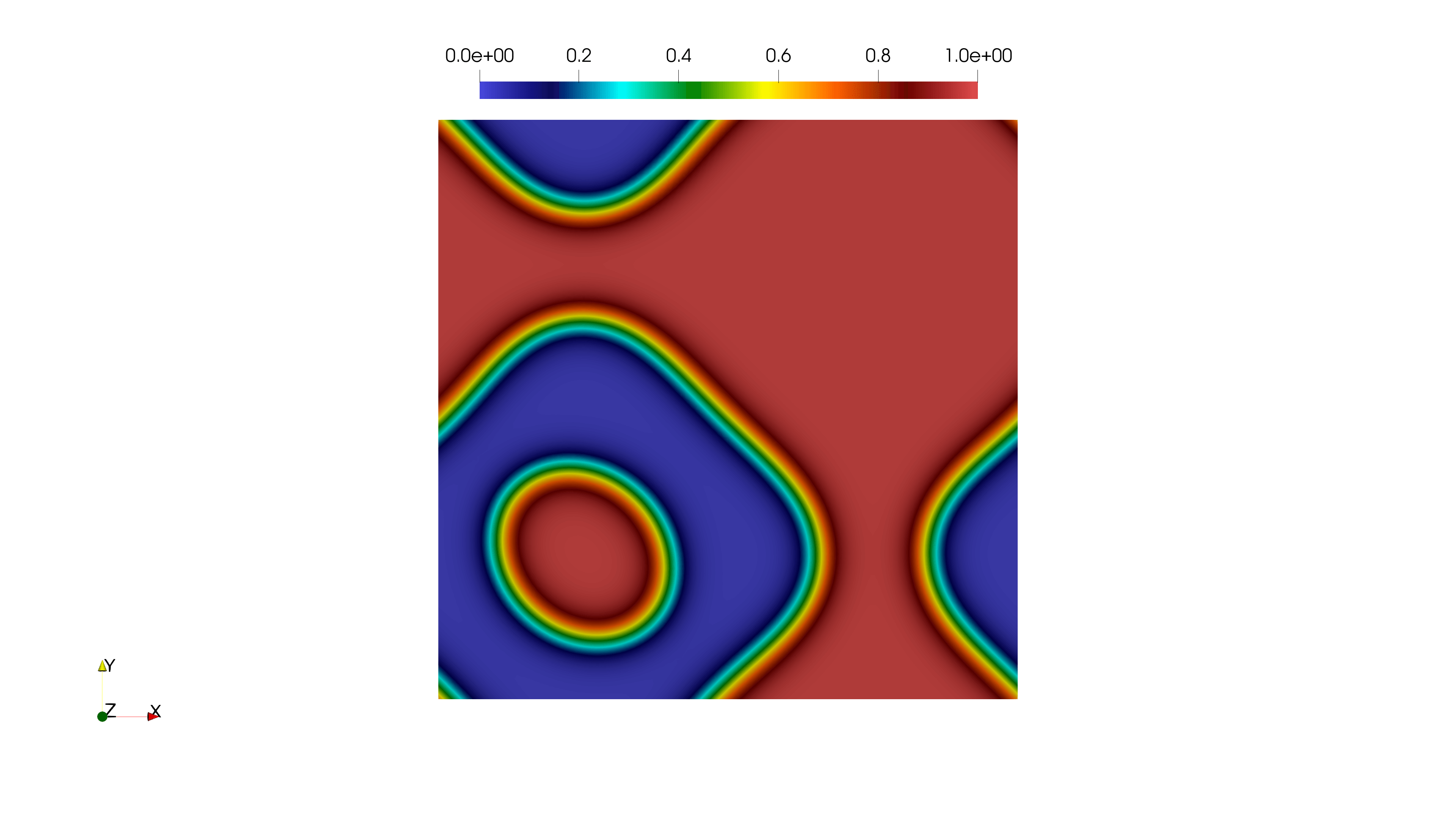} \\
                $t=0.1$ & $t=0.5$ & $t=1$ & $t=2$ & $t=5$  \\
            \end{tabular}
            \caption{Snapshots of the temporal evolution of $\phi_h$ for $i=A$ (top) compared with snapshots of $\phi_h$ for $i=B$ (bottom).}
            \label{fig:ex_melt_phi}
        \end{figure}

        \begin{figure}[htbp!]
            \centering
            %\hspace*{-1.5em}
            \begin{tabular}{c@{}c@{}c@{}c@{}c@{}}
            \multicolumn{5}{c}{\includegraphics[trim={40cm 66cm 40cm 0cm},clip,scale=0.15]{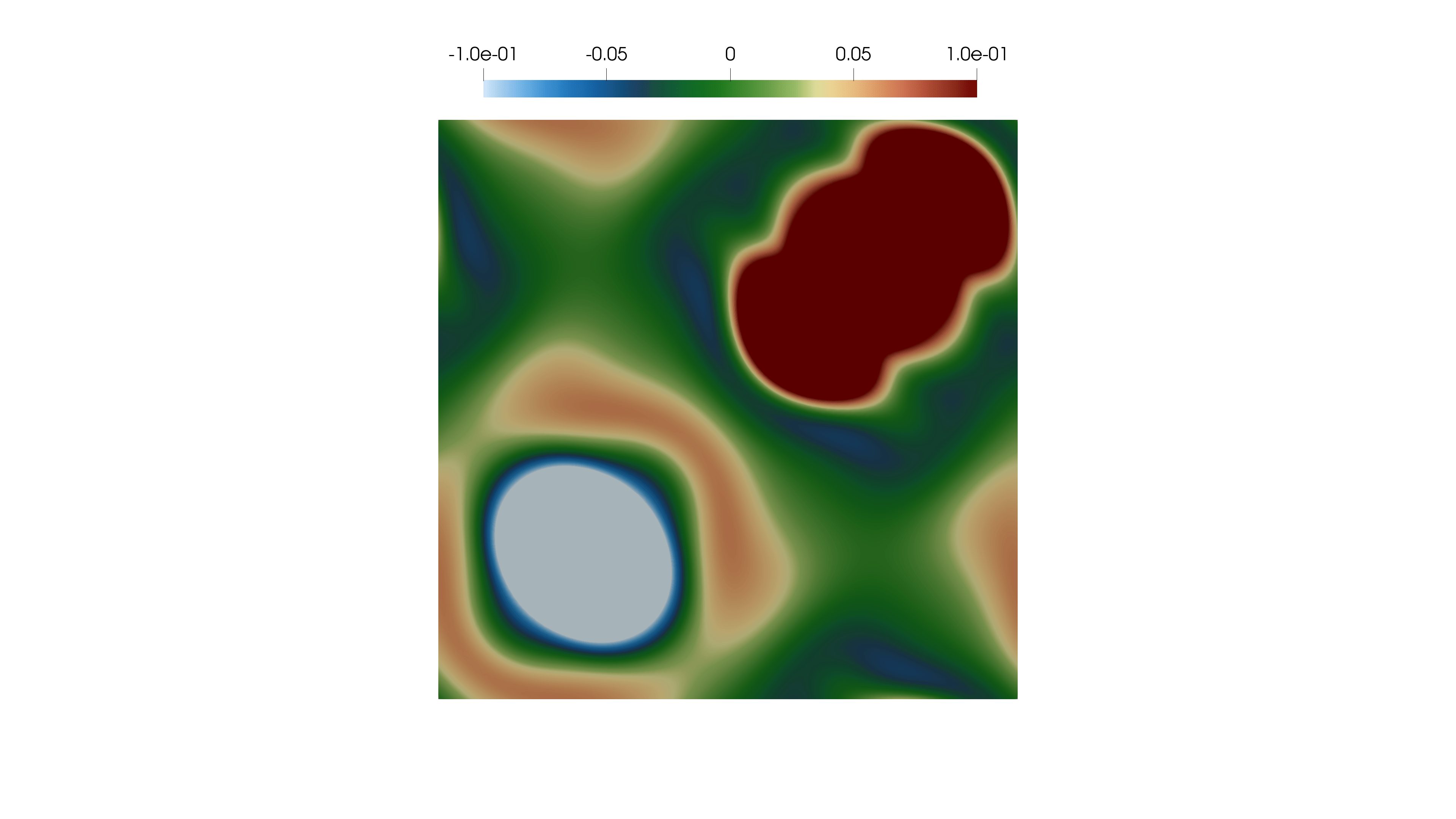}}\\
                \includegraphics[trim={40cm 10cm 40cm 10cm},clip,scale=0.06]{Bilder/theta_scale_a.0010.png}
                &
                \includegraphics[trim={40cm 10cm 40cm 10cm},clip,scale=0.06]{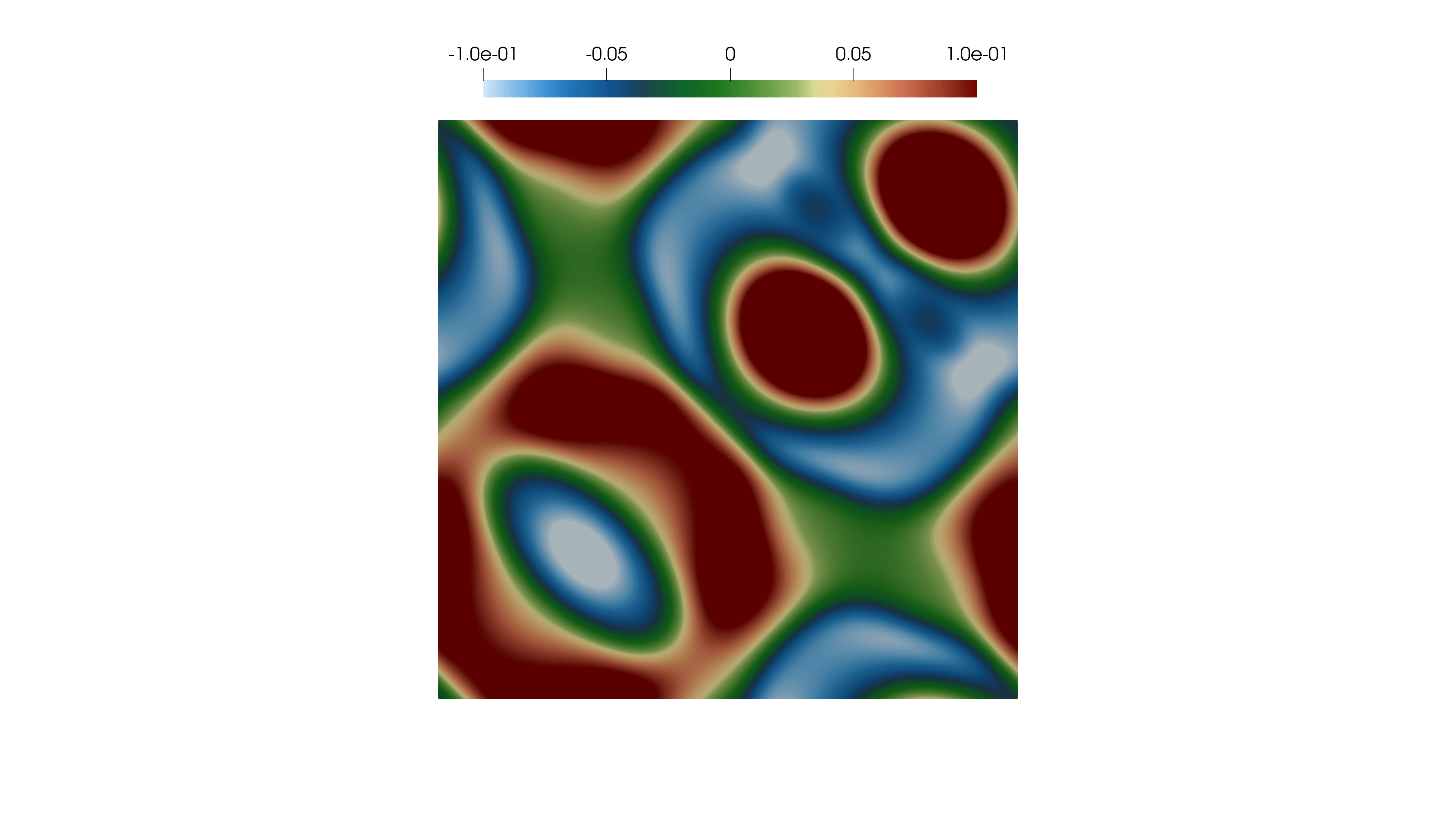} 
                &
                \includegraphics[trim={40cm 10cm 40cm 10cm},clip,scale=0.06]{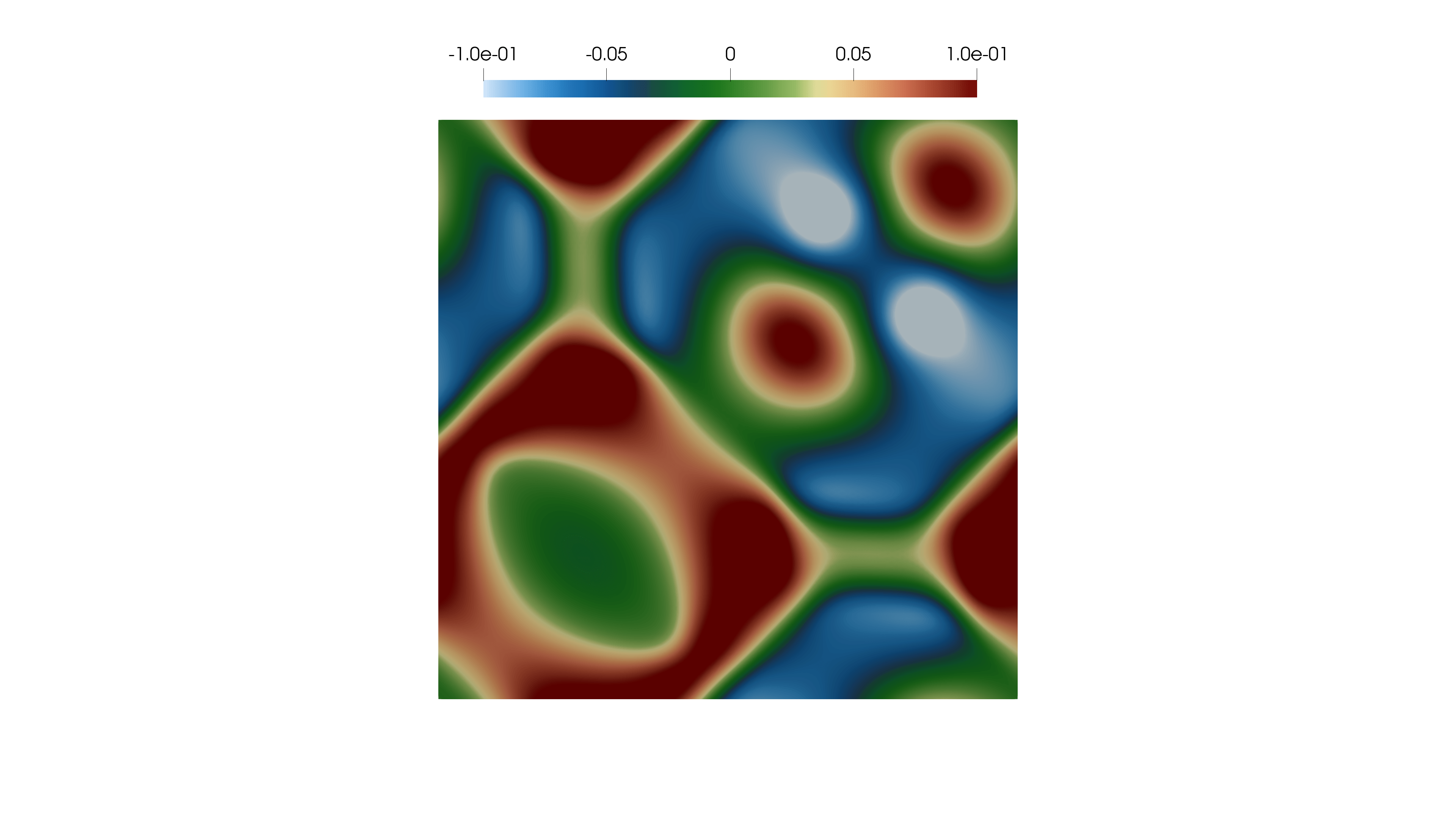}
                &
                \includegraphics[trim={40cm 10cm 40cm 10cm},clip,scale=0.06]{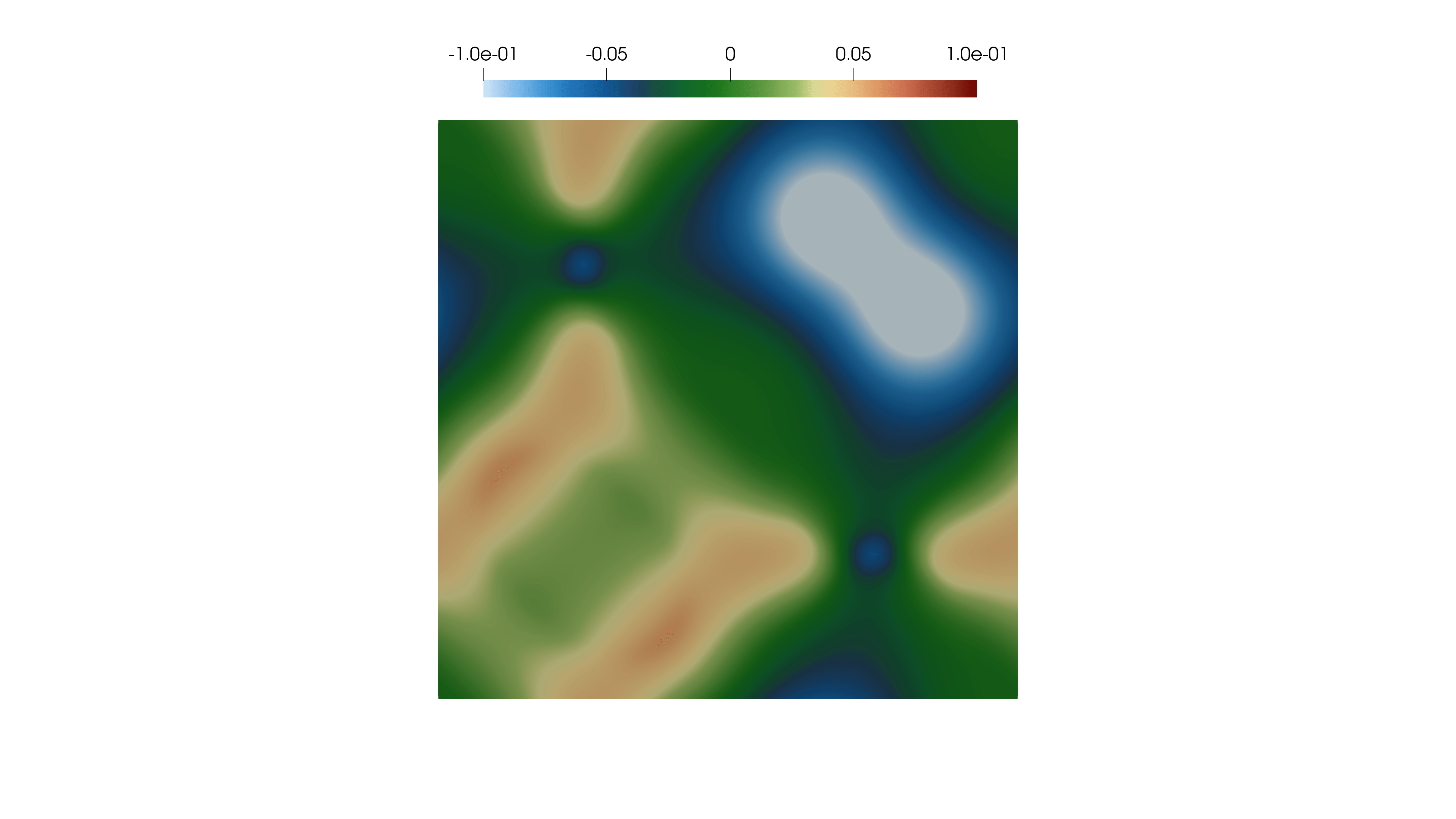} 
                &
                \includegraphics[trim={40cm 10cm 40cm 10cm},clip,scale=0.06]{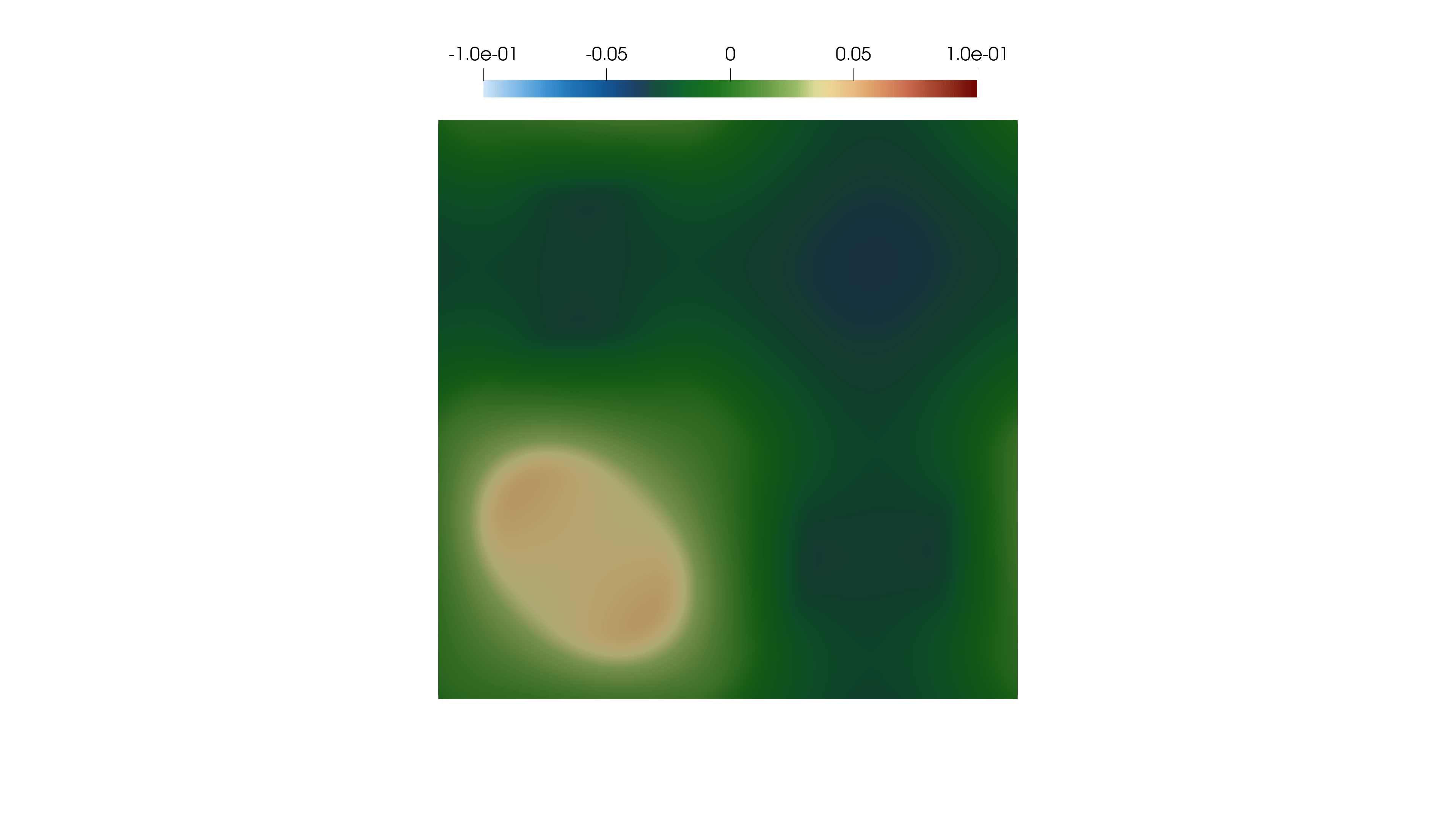} \\
                \includegraphics[trim={40cm 10cm 40cm 10cm},clip,scale=0.06]{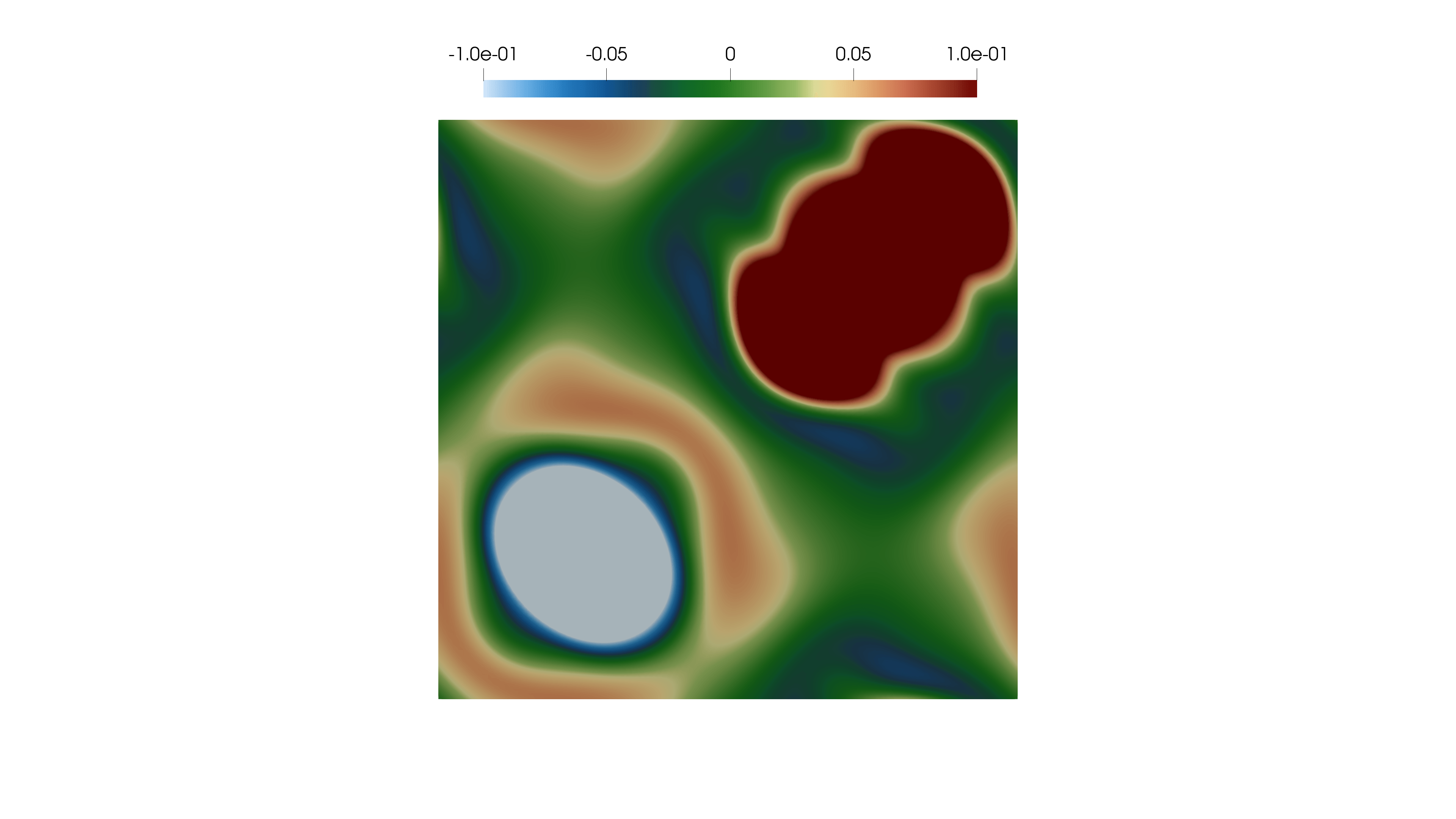}
                &
                \includegraphics[trim={40cm 10cm 40cm 10cm},clip,scale=0.06]{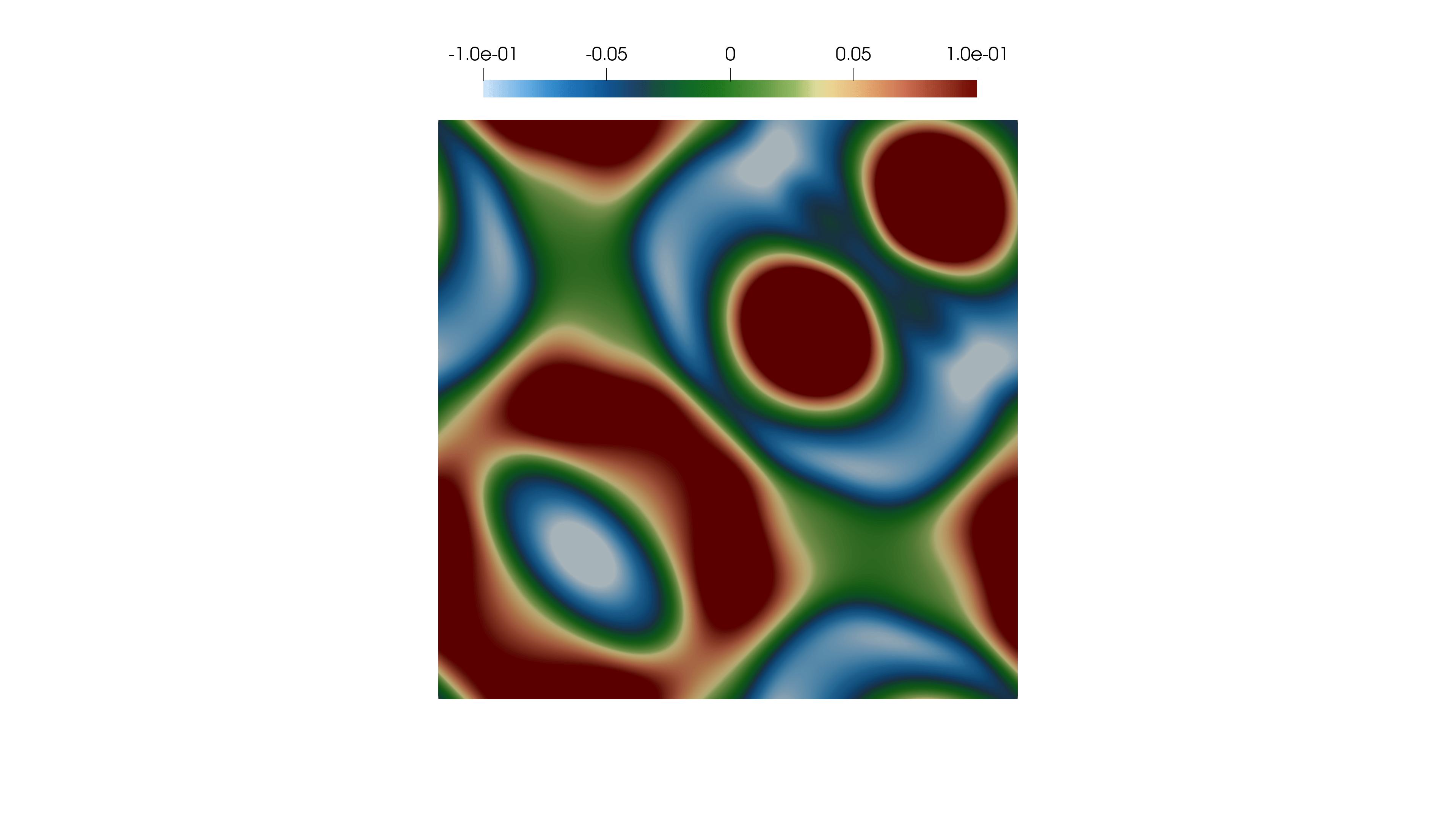} 
                &
                \includegraphics[trim={40cm 10cm 40cm 10cm},clip,scale=0.06]{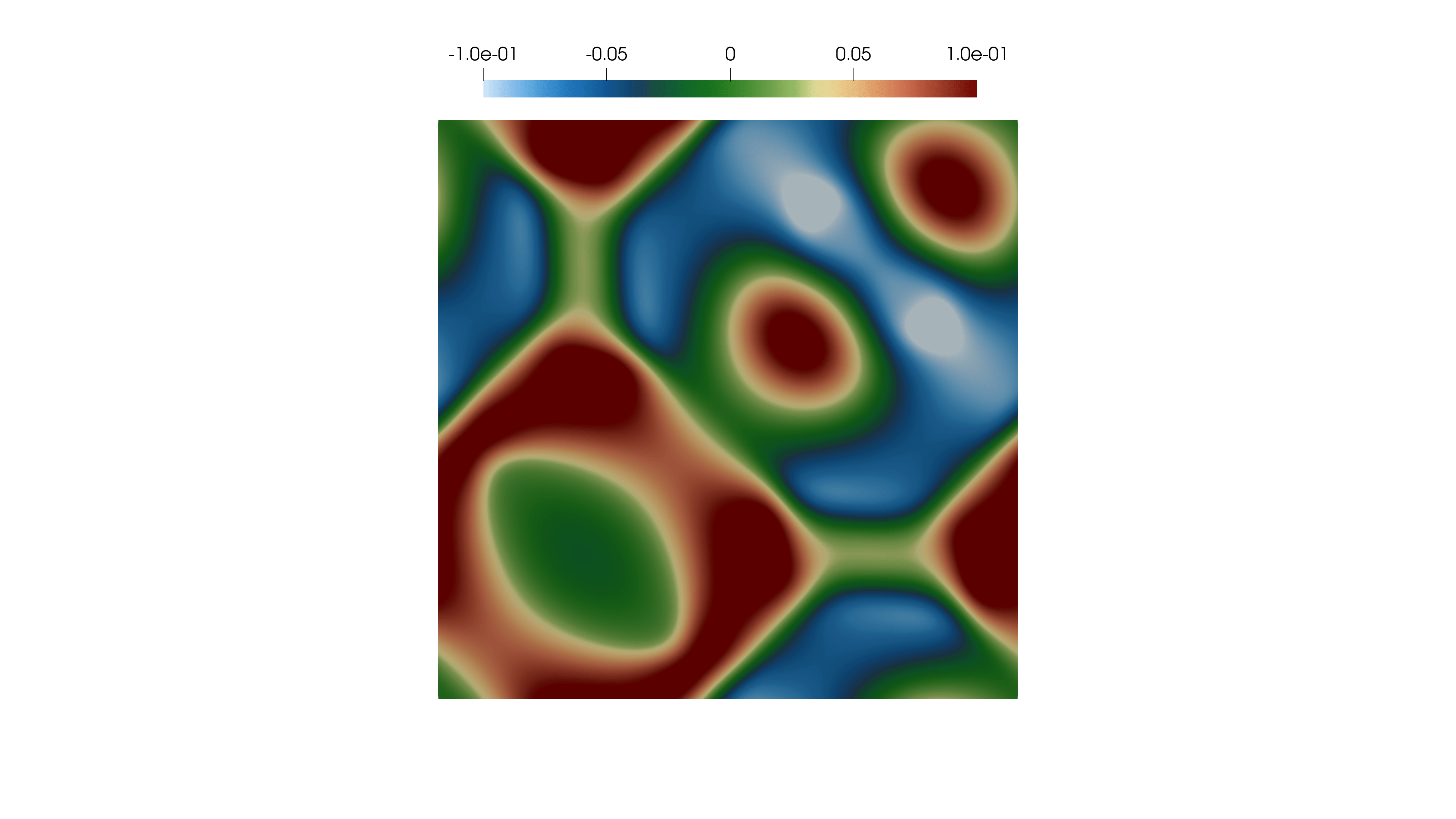}
                &
                \includegraphics[trim={40cm 10cm 40cm 10cm},clip,scale=0.06]{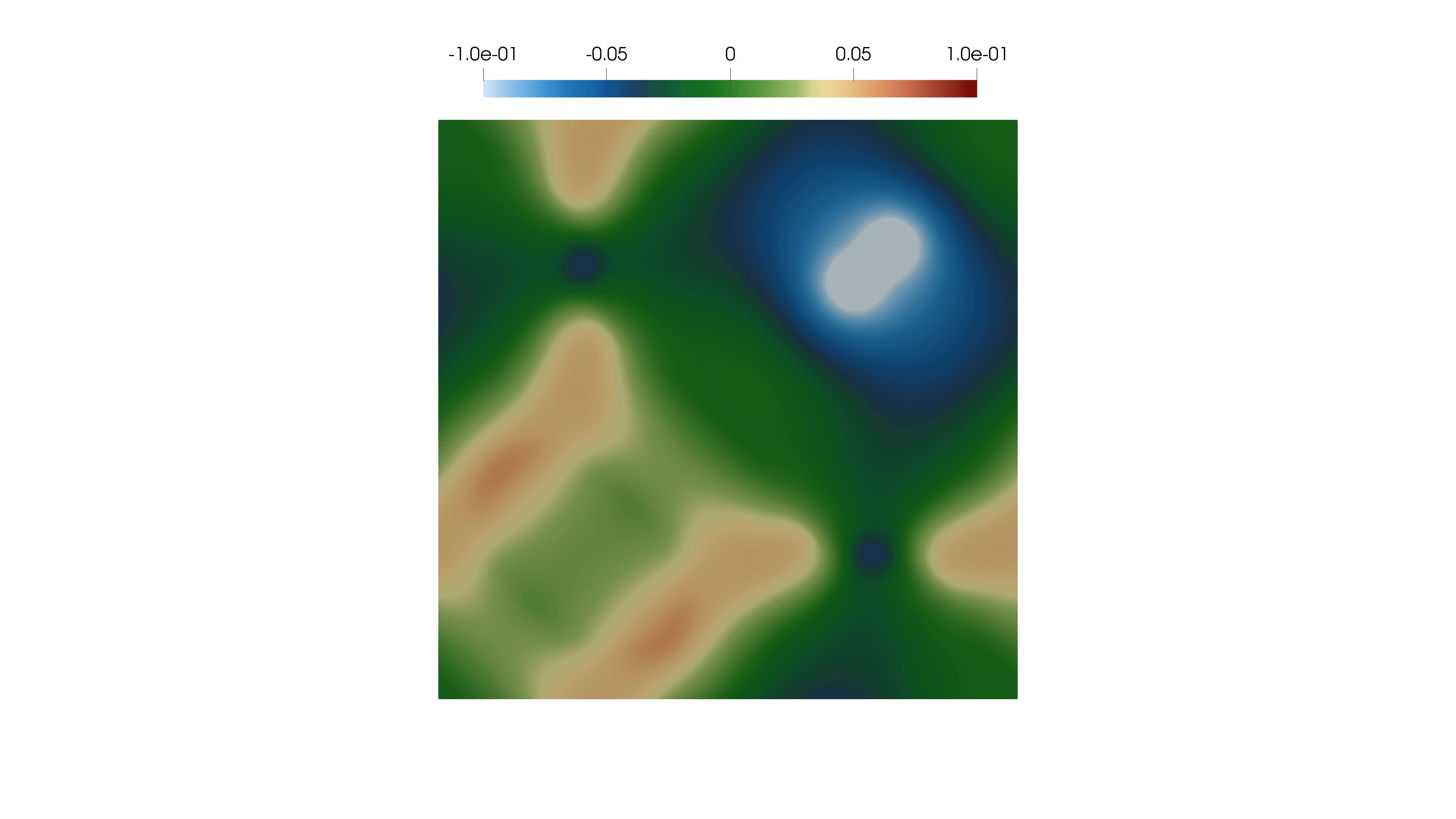} 
                &
                \includegraphics[trim={40cm 10cm 40cm 10cm},clip,scale=0.06]{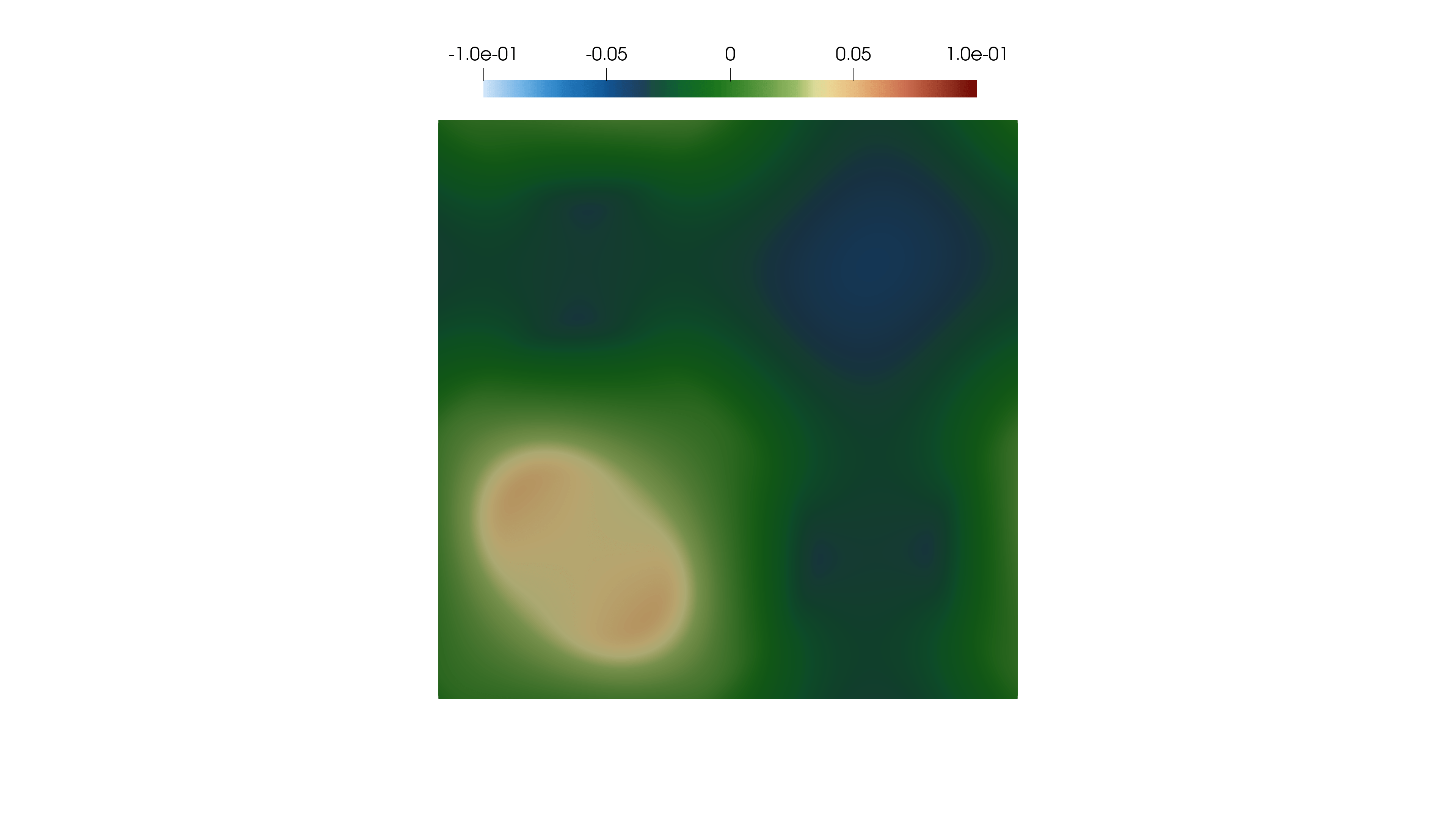} \\
                $t=0.1$ & $t=0.5$ & $t=1$ & $t=2$ & $t=5$  \\
            \end{tabular}
            \caption{Snapshots of the temporal evolution of $\theta_h-1$ for $i=A$ (top) and $i=B$ (bottom). The colorbar is cropped after $\pm0.1$ although at the initial stage these range is strongly exceeded.}
            \label{fig:ex_melt_theta}
        \end{figure}
        \begin{figure}[htbp!]
            \centering
         \begin{tabular}{cc@{}}
                %\hspace{-0.05\linewidth}
                \includegraphics[width=0.4\linewidth,trim={1.5cm 0cm 0cm 0cm},clip]{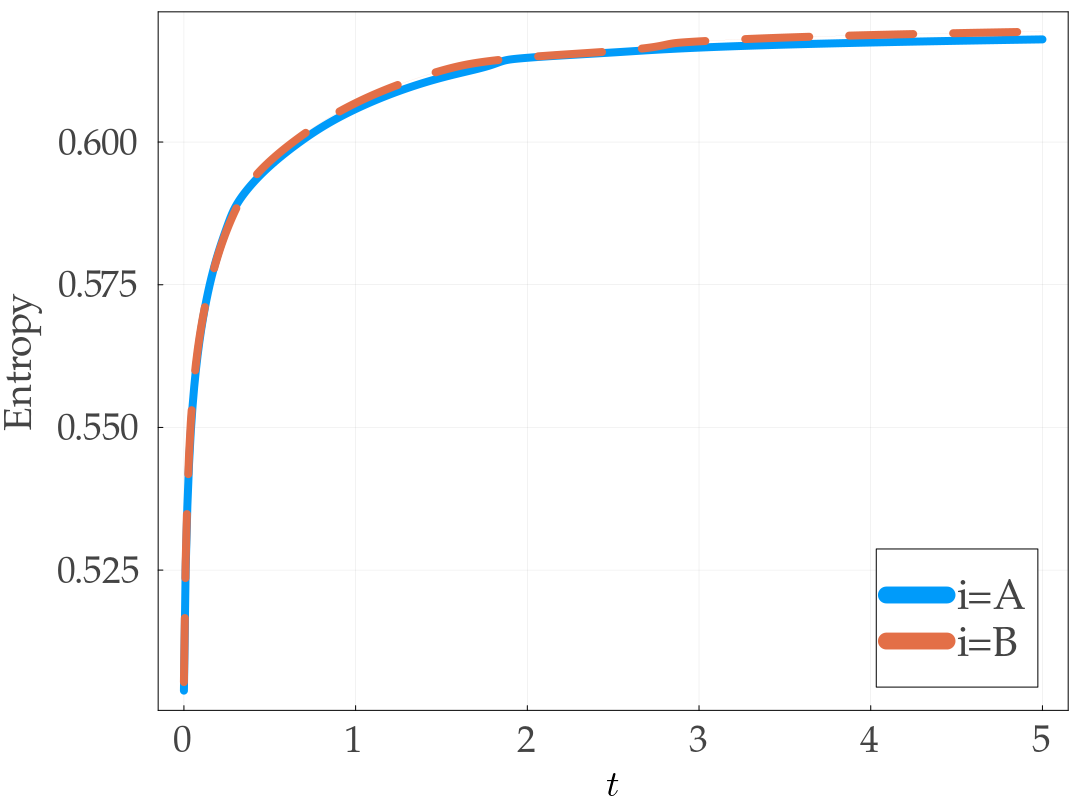}
                &
                \includegraphics[width=0.4\linewidth,trim={1.5cm 0cm 0cm 0cm},clip]{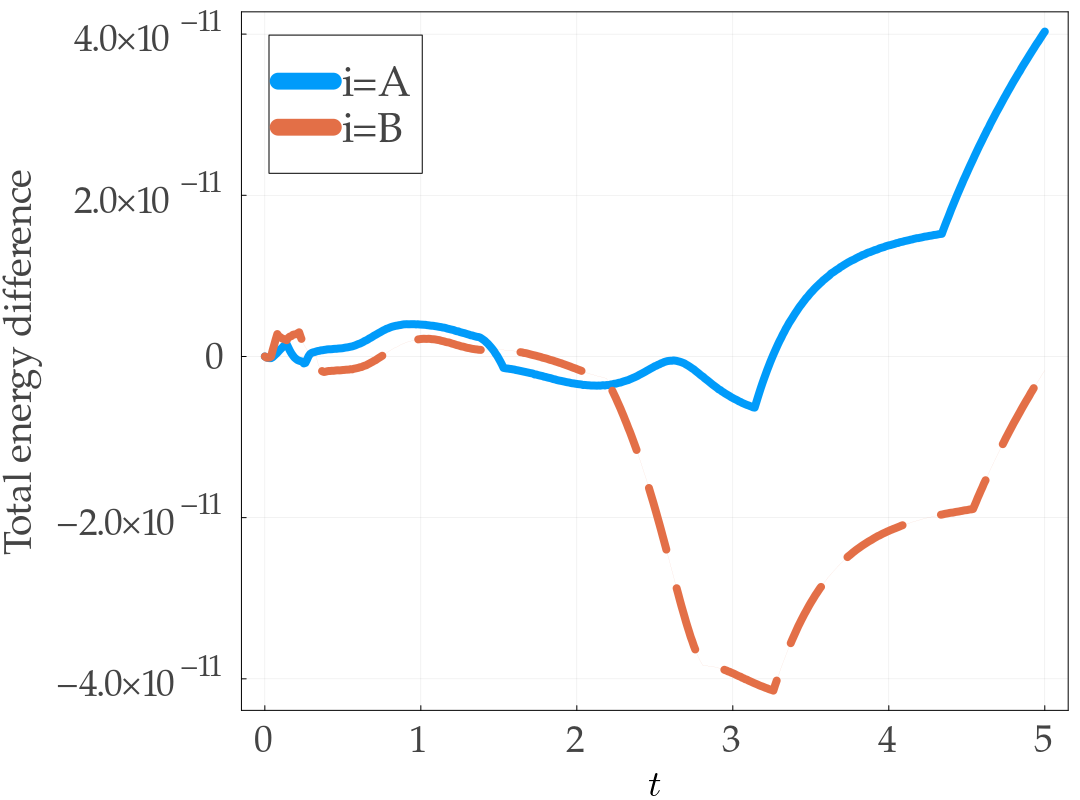}
            \end{tabular}
            \caption{Temporal evolution of the integrated entropy density $s_h$ (left) and conservation error of the integrated total energy density $e_{\mathrm{tot},h}$ (right)}
            \label{fig:ex_melt_struct}
        \end{figure}

\section{Conclusion \& Outlook}
In this work, we developed a numerical method based on the entropy formulation that conserves total energy and guarantees exact entropy production. The main idea was to reformulate the system using entropy as the main variable, rather than the previous approach, which treated entropy as a non-linear function of the phase-field and temperature. The properties of the scheme and its experimental order of convergence are demonstrated and align with previous results and expectations. In the future, we plan to consider the error analysis for such a discretisation, which is formally second-order in time and first/second-order in space. Furthermore, we aim to extend such discretisations for applications in powder bed fusion process, i.e. for a model similar to \cite{yangscripta2020}.

\bibliographystyle{pamm}
\bibliography{S18_Brunk_xx_v1}
\end{document}